\newcommand{\N}{\mathbb{N}}
\newcommand{\R}{\mathbb{R}}
\newtheorem{thm}{Theorem}[section]
\newtheorem{ex}[thm]{Example}
\newtheorem{cor}[thm]{Corollary}
\newtheorem{defn}[thm]{Definition}
\newtheorem{rmrk}[thm]{Remark}
\DeclareMathOperator{\Vol}{Vol}
\begin{document}

\title{Compactness of Sequences of Warped Product Length Spaces}

\author{Brian Allen}
\author{Bryan Sanchez}
\author{Yahaira Torres}
\address{Lehman College, CUNY}

\maketitle

\begin{abstract}
 If we consider a sequence of warped product length spaces, what conditions on the sequence of warping functions implies compactness of the sequence of distance functions? In particular, we want to know when a subsequence converges to a well defined metric space on the same manifold with the same topology. What conditions on the sequence of warping functions implies Lipschitz bounds for the sequence of distance functions and/or the limiting distance function? In this paper we give answers to both of these questions as well as many examples which elucidate the theorems and show that our hypotheses are necessary.
\end{abstract}

\section{Introduction}

When studying sequences of Riemannian manifolds or length spaces, it is often helpful to have compactness theorems which tell you that at least there is a subsequence which converges. Then the goal is usually to show that the subsequence converges to a particular desired limit space, which will often yield convergence of the entire sequence to that limit space. If we consider a sequence of warped product length spaces then the goal of proving a compactness theorem can be phrased in terms of the sequence of warping functions. What conditions on the sequence of warping functions implies compactness of the sequence of distance functions? In particular, we want to know when a subsequence converges to a well defined metric space on the same manifold with the same topology. What conditions on the sequence of warping functions implies Lipschitz bounds for the sequence of distance functions and/or the limiting distance function? In this paper, we give answers to these questions as well as many examples which elucidate the theorems and show that our hypotheses are necessary.

Similar questions were first studied by B. Allen and C. Sormani \cite{Allen-Sormani} where their goal was to deduce conditions on a sequence of warping functions which guarantees convergence in the uniform sense to a particular warped product length space. Many illuminating examples were given and expanded on in \cite{Allen-Sormani-2} where conformal examples were studied. This investigation then culminated in the proof of the Volume Above Distance Below (VADB) Theorem by B. Allen, R. Perales, and C. Sormani \cite{Allen-Perales-Sormani-VADB} for general sequences of Riemannian manifolds. To give the statement, let $(M,g_j)$ be a sequence of continuous Riemannian manifolds with bounded diameter and $(M,g_{\infty})$ a smooth Riemannian manifold. Then the VADB Theorem says that if $\Vol(M,g_j) \rightarrow \Vol(N,g_{\infty})$ and $g_j \ge \left(1-C_j\right)g_{\infty}$ where $C_j \rightarrow 0$, then $(M,g_j)$ converges to $(M,g_{\infty})$ in the Sormani-Wenger Intrinsic Flat sense. We also note that L.-H. Huang, D. Lee, and C. Sormani \cite{HLS} establish a compactness theorem in the uniform, Gromov-Hausdorff, and Sormani-Wenger Intrinsic Flat sense for metric spaces under the assumption of uniform Lipschitz bounds. Our results suggest that one can prove uniform and Gromov-Hausdorff compactness in general under weaker hypotheses.

The VADB theorem has been used to prove stability of the Llarull theorem by B. Allen, E. Bryden, and D. Kazaras \cite{ABKLLarull} in dimension three and by S. Hirsch and Z. Yang \cite{HZ} in all dimensions. The VADB has also been used to study the conformal case of the scalar torus stability conjecture by Allen \cite{Allen-Conformal-Torus}, Chu and Lee  \cite{Chu-Man-Chun22}, the graph case of Question scalar torus stability by  Pacheco,  Ketterer and  Perales \cite{PKP19},  and the graph case of positive mass theorem stability by Sormani,  Huang, and Lee \cite{HLS}, Allen and Perales \cite{Allen-Perales}, Perales, Huang and Lee \cite{Huang-Lee-Perales} in the asymptotically Euclidean case, and Pacheco,  Graf and Perales \cite{PGPAsymHyp} in the asymptotically hyperbolic case. The development of the VADB theorem would not have been possible without the insights gleaned from many warped product and conformal examples studied by B. Allen and C. Sormani \cite{Allen-Sormani, Allen-Sormani-2} which demonstrates the importance of studying warped product length spaces in order to better understand compactness in general, as we do in this paper.

One important conjecture we would like to understand better is the scalar curvature compactness conjecture. In its simplest form, this conjecture asks: What additional conditions are necessary on a sequence of Riemannian manifolds with positive scalar curvature so that a subsequence converges in the Sormani-Wenger Intrinsic Flat sense to a metric space with a notion of positive scalar curvature? One can see a precise conjecture stated by C. Sormani  in \cite{IAS}. J. Park, W. Tian, and C. Wang \cite{PTW} consider rotationally symmetric warped products,   C. Sormani, W. Tian, and C. Wang  studied $\mathbb{S}^1\times_f \mathbb{S}^2$ warped products with an important example already explored \cite{WCS} and $W^{1,p}$, $p < 2$ convergence obtained \cite{tian2023compactness}. B. Allen, W. Tian, and C. Wang \cite{allen-tian-wang} studied the conformal case and showed a similar convergence as Dong \cite{Dong-PMT_Stability} for the stability of the positive mass theorem. An overarching goal of the work of this paper is to develop an extensive understanding of what conditions are needed on a sequence of Reimannian manifolds in order to conclude compactness in the sense of uniform, Gromov-Hausdorff, Sormani-Wenger Intrisic Flat, and possibly other notions of convergence. In other words, to develop an analogous theorem as the VADB theorem for compactness. By following the same approach as the development of the VADB theorem, we start by studying the problem for warped product length spaces where many examples can be explored which sharpen our intuition for the general setting.

We now define warped product length spaces, a particular example of which is warped product Riemannian manifolds. If $t_0,t_1 \in \R$, $t_0< t_1$, $f:[t_0,t_1]\rightarrow (0,\infty)$ is a bounded, measurable function, $(\Sigma,\sigma)$ is a compact Riemannian manifold, $M=[t_0,t_1]\times \Sigma$, then we define the length of a piecewise smooth curve $\gamma:[0,1]\rightarrow M$, $\gamma(t)=(x(t),\alpha(t))$ with respect to the warping function $f$ to be
\begin{align}\label{def-Warped Product Length Formula}
    L_f(\gamma)&= \int_0^1 \sqrt{x'(t)^2+f(x(t))^2\sigma(\alpha'(t),\alpha'(t))}dt.
\end{align}
Once one has defined a way of measuring the lengths of curves we can use this to define a distance function in a standard way for $p,q \in M$
\begin{align}\label{def-WeightedDistance}
    d_f(p,q)=\inf\{L_f(\gamma): \gamma \text{ piecewise smooth}, \gamma(0)=p,\gamma(1)=q\}.
\end{align}
We now state our first main theorem where we are able to establish uniform Lipschitz bounds on a sequence of warped product length spaces which leads to compactness of their distance functions in the uniform sense. 

\begin{thm}\label{thm-Main Thm 1} 
     Let $0<c<1 \le C < \infty$, $f_n:[t_0,t_1]\rightarrow (0,\infty)$ be a sequence of bounded, measurable functions, and $Q_n \subset [t_0,t_1]$ a sequence of countable dense subsets. If 
    \begin{align}
        f_n(q) &\le C, \quad \forall q \in Q_n,
        \\ f_n(x) &\ge c, \quad \forall x \in [t_0,t_1], 
    \end{align}
    then $d_{f_n}$ satisfies a Lipschitz bound
    \begin{align}
     c d(p,q)\le  d_{f_n}(p,q) \le \sqrt{2}C d(p,q),
    \end{align}
    for all $p,q \in M$ and a subsequence of $d_{f_n}$ uniformly converges to a metric $d_{\infty}$ with the same topology as the sequence where 
    \begin{align}
     c d(p,q)\le  d_{\infty}(p,q) \le \sqrt{2}C d(p,q).
    \end{align}
\end{thm}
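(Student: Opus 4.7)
The plan is to prove the pointwise two-sided Lipschitz bound first, and then deduce uniform convergence of a subsequence by Arzelà--Ascoli; the limiting metric will inherit the bounds and hence the topology.

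For the lower bound, I would compare integrands directly. Writing any piecewise smooth $\gamma(t)=(x(t),\alpha(t))$, the hypothesis $f_n\ge c$ together with $c<1$ gives
\begin{align*}
\sqrt{x'(t)^2+f_n(x(t))^2\sigma(\alpha',\alpha')} \ge c\sqrt{x'(t)^2+\sigma(\alpha',\alpha')},
\end{align*}
so $L_{f_n}(\gamma)\ge c\, L_1(\gamma)$, which after taking the infimum yields $d_{f_n}(p,q)\ge c\, d(p,q)$ where $d$ is the unweighted (product) distance on $M$.

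For the upper bound, the main obstacle is that $f_n\le C$ is only assumed on the countable dense set $Q_n$, not pointwise, so one cannot bound the integrand of an arbitrary curve. The idea I would use is to force the $\Sigma$-motion of a test curve to happen at a single level $\bar x\in Q_n$ where the bound is known. Given $p=(x_p,\alpha_p)$ and $q=(x_q,\alpha_q)$, I would fix $\epsilon>0$, choose $\bar x\in Q_n$ with $|\bar x - x_p|+|\bar x - x_q|\le |x_p-x_q|+\epsilon$ (possible by density of $Q_n$ and the triangle inequality on $[t_0,t_1]$), and take $\gamma$ to be the concatenation of: (i) the vertical segment from $(x_p,\alpha_p)$ to $(\bar x,\alpha_p)$, (ii) the horizontal lift of a near-minimizing $\sigma$-geodesic from $\alpha_p$ to $\alpha_q$ at level $\bar x$, and (iii) the vertical segment from $(\bar x,\alpha_q)$ to $(x_q,\alpha_q)$. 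On the vertical pieces the integrand is just $|x'(t)|$, contributing at most $|x_p-x_q|+\epsilon$. On the horizontal piece $x'\equiv 0$ and $f_n(\bar x)\le C$, contributing at most $C\, d_\sigma(\alpha_p,\alpha_q)+\epsilon$. Letting $\epsilon\to 0$ gives
\begin{align*}
d_{f_n}(p,q)\le |x_p-x_q|+C\, d_\sigma(\alpha_p,\alpha_q).
\end{align*}
The elementary inequality $a+Cb\le\sqrt{2}\,C\sqrt{a^2+b^2}$ (valid for $C\ge 1$, from $(a+Cb)^2\le 2(a^2+C^2b^2)\le 2C^2(a^2+b^2)$) combined with $d(p,q)=\sqrt{|x_p-x_q|^2+d_\sigma(\alpha_p,\alpha_q)^2}$ yields the desired $d_{f_n}(p,q)\le\sqrt{2}\,C\, d(p,q)$.

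Finally, for the compactness statement, the established two-sided bound shows that $\{d_{f_n}\}$ is an equicontinuous (in fact uniformly $\sqrt{2}\,C$-Lipschitz with respect to $d\times d$) and uniformly bounded family of functions on the compact product $M\times M$. Arzelà--Ascoli then produces a subsequence converging uniformly to a continuous function $d_\infty:M\times M\to\R$. Symmetry, the triangle inequality and $d_\infty(p,p)=0$ pass to the limit trivially, and the inherited lower bound $d_\infty(p,q)\ge c\, d(p,q)$ forces $d_\infty$ to separate points, so it is a genuine metric. The upper bound $d_\infty\le\sqrt{2}\,C\, d$ passes to the limit in the same way, and the two bounds together say $d_\infty$ is bi-Lipschitz equivalent to $d$, hence induces the same topology on $M$.
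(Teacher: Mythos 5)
Your proof is correct and follows essentially the same strategy as the paper: the lower bound comes from the pointwise comparison $f_n\ge c$ applied to the integrand (the paper's Theorem~\ref{thm-Bounded warping function}), the upper bound from routing all $\Sigma$-direction travel through a single level $\bar x\in Q_n$ chosen by density so that $f_n(\bar x)\le C$ (the paper's Theorem~\ref{thm-Dense Subset Comparison to Taxi}), and Arzel\`{a}--Ascoli with the inherited bi-Lipschitz bounds gives the limiting metric with the same topology. The only cosmetic differences are that your $\varepsilon$-argument unifies the paper's two cases $x(p)=x(q)$ and $x(p)\neq x(q)$, and you apply the elementary estimate $a+Cb\le\sqrt{2}\,C\sqrt{a^2+b^2}$ directly where the paper first passes to $d_{taxi}$ and then uses $d_{taxi}\le\sqrt{2}\,d$.
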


In Example \ref{ex-s_n Converge Quotient Metric Space}, Example \ref{ex-h_n Converge Quotient Metric Space}, and Remark \ref{rmrk-z_n Converge Quotient Metric}, we see that if there exists even just one point where the sequence of warping functions does not satisfy a uniform positive lower bound, then one cannot expect the conclusion of Theorem \ref{thm-Main Thm 1} to be true. This is why it is necessary to assume a uniform lower bound on the sequence of warping functions. In Example \ref{ex-h_n Converge Quotient Metric Space}, we see that even if the sequence of warping functions pointwise converges to $1$ everywhere, we cannot expect compactness as in Theorem \ref{thm-Main Thm 1}. Shortcut examples like this first appeared in the work of B. Allen and C. Sormani \cite{Allen-Sormani, Allen-Sormani-2} where we note that Example \ref{ex-h_n Converge Quotient Metric Space} demonstrates new shortcut behaviour. B. Allen \cite{Allen} has also established H\"{o}lder bounds on a general sequence of Riemannian manifolds, but the technique used there cannot produce a Lipschitz bound.

In Example \ref{ex-k_n uniform convergence} of Section \ref{sec-Examples}, we see an example which does not satisfy the hypotheses of Theorem \ref{thm-Main Thm 1}, and we show in Theorem \ref{thm-k_n no Lipschitz Bound} that Example \ref{ex-k_n uniform convergence} does not satisfy a Lipschitz bound. This shows that we cannot expect to make a weaker assumption and derive the same conclusion as Theorem \ref{thm-Main Thm 1}. That being said, Example \ref{ex-k_n uniform convergence} does uniformly converge to the Euclidean metric. This motivates the pursuit of a different hypothesis which may not imply a uniform Lipschitz bound for the sequence but will imply compactness. In order to state the condition on the sequence of warping functions, we make the following definition.

\begin{defn}\label{def-Dense Countable Family of Subsets Introduction}
    Let $C_n$ be a sequence of non-negative real numbers so that $C_n \rightarrow 0$ as $n \rightarrow \infty$. Let $I_n\subset [t_0,t_1]$ be a sequence of subsets so that $\forall x \in [t_0,t_1]$, $\exists y \in I_n$ so that $|x-y|\le C_n$. We call such a family of subsets a \textbf{dense countable family of subsets}.
\end{defn}

The idea is to only require that the sequence of functions is bounded on a dense countable family of subsets in order to obtain what we call an almost Lipschitz bound. The almost Lipschitz bound is enough to show that the sequence of functions is eventually equicontinuous, defined in Definition \ref{defn-Eventually Equicontinuous Metrics} of Section \ref{sec-Background}, which is enough to prove an Arzel\`{a}-Ascoli type theorem to give compactness. This leads to our second main theorem.

\begin{thm}\label{thm-Main Thm 2} 
     Let $0 < c < 1 \le C < \infty$, $f_n:[t_0,t_1]\rightarrow (0,\infty)$ be a sequence of bounded, measurable functions, $C_n>0$ a sequence so that $C_n \rightarrow 0$ as $n \rightarrow \infty$, and $Q_n \subset [t_0,t_1]$ a dense countable family of subsets. If 
    \begin{align}
        f_n(q) &\le C, \quad \forall q \in Q_n,
        \\ f_n(x) &\ge c, \quad \forall x \in [t_0,t_1],
    \end{align}
    then $d_{f_n}$ satisfies almost Lipschitz bounds,
    \begin{align}
       c d(p,q) \le d_{f_n}(p,q) \le \sqrt{2}C d(p,q)+2C_n,
    \end{align}
    and  a subsequence of $d_{f_n}$ uniformly converges to a metric $d_{\infty}$ with the same topology as the sequence where 
    \begin{align}
     c d(p,q)\le  d_{\infty}(p,q) \le \sqrt{2}C d(p,q).
    \end{align}
\end{thm}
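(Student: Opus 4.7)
The plan is to reduce Theorem~\ref{thm-Main Thm 2} to three steps: the lower Lipschitz bound, an almost-Lipschitz upper bound obtained via a carefully chosen test curve, and an Arzel\`a-Ascoli argument adapted to the eventually equicontinuous regime produced by $C_n \to 0$. Once the limit $d_\infty$ is extracted, the final Lipschitz bounds on $d_\infty$ come from passing the finite-$n$ bounds to the limit. The lower bound is immediate from $f_n \ge c$ together with $c < 1$: for any piecewise smooth $\gamma(t) = (x(t),\alpha(t))$,
\[
L_{f_n}(\gamma) \ge \int_0^1 \sqrt{c^2 x'(t)^2 + c^2 \sigma(\alpha'(t),\alpha'(t))}\,dt = c\, L_1(\gamma),
\]
and taking infima gives $d_{f_n}(p,q) \ge c\, d(p,q)$, where $d$ denotes the unwarped ($f \equiv 1$) product distance.

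For the upper bound, the key observation is that purely vertical motion in $M$ is insensitive to $f_n$: when $\alpha'(t) \equiv 0$, the integrand in $L_{f_n}$ reduces to $|x'(t)|$. Given $p = (x_p,\alpha_p)$ and $q = (x_q,\alpha_q)$, the density hypothesis supplies a $y \in Q_n$ with $|x_p - y| \le C_n$. I would then test with a three-leg piecewise smooth curve: a vertical leg $(x_p,\alpha_p) \to (y,\alpha_p)$; a horizontal leg $(y,\alpha_p) \to (y,\alpha_q)$ along a near-minimizing $\sigma$-geodesic with $x$ pinned at $y$; and a final vertical leg $(y,\alpha_q) \to (x_q,\alpha_q)$. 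The middle leg has $x \equiv y \in Q_n$, so its contribution is at most $C\, d_\sigma(\alpha_p,\alpha_q)$; the two vertical legs contribute $|x_p - y| + |y - x_q| \le 2C_n + |x_p - x_q|$. Since $C \ge 1$, Cauchy-Schwarz gives $|x_p - x_q| + C\, d_\sigma(\alpha_p,\alpha_q) \le \sqrt{2}\, C\, d(p,q)$, so this test curve yields $d_{f_n}(p,q) \le \sqrt{2}\, C\, d(p,q) + 2C_n$.

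With the two-sided bound in hand, the triangle inequality for $d_{f_n}$ upgrades the almost-Lipschitz bound into $|d_{f_n}(p,q) - d_{f_n}(p',q')| \le \sqrt{2}\, C\, (d(p,p') + d(q,q')) + 4C_n$ on $M \times M$, which is eventual equicontinuity in the sense of Definition~\ref{defn-Eventually Equicontinuous Metrics}, together with pointwise boundedness via $d_{f_n} \le \sqrt{2}\, C\, \operatorname{Diam}_d(M) + 2C_n$. I would then invoke an Arzel\`a-Ascoli-type theorem for eventually equicontinuous families: diagonalize to extract a pointwise convergent subsequence on a countable dense subset of $M \times M$, then use the eventual equicontinuity with $C_n \to 0$ to upgrade to uniform convergence on all of $M \times M$. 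The limit $d_\infty$ inherits symmetry, the triangle inequality, and vanishing on the diagonal from the $d_{f_n}$, while $d_\infty \ge c\, d$ forces positive definiteness, so $d_\infty$ is a metric. Passing the almost-Lipschitz bound to the limit absorbs $2C_n \to 0$ and yields the clean Lipschitz bound $c\, d \le d_\infty \le \sqrt{2}\, C\, d$, which makes $d_\infty$ bi-Lipschitz equivalent to $d$ and hence induces the same topology.

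The main obstacle I anticipate is executing the Arzel\`a-Ascoli step rigorously in the merely eventual, additively slack equicontinuity regime: one must confirm that both the finite-$n$ slack $4C_n$ and the initial finitely many non-equicontinuous terms are absorbed in the uniform comparison, and that the limit satisfies the sharper, slack-free Lipschitz bound in the conclusion. The secondary subtlety is the choice of the three-leg test curve; the observation that vertical motion is ``free'' under any $f_n$ is what lets a single anchor point $y \in Q_n$ near $x_p$ route the whole trip through a slice on which $f_n$ is controlled by $C$, without ever needing to bound $f_n$ on the full interval.
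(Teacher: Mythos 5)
Your proposal is correct and follows essentially the same route as the paper: an almost-Lipschitz upper bound from a three-leg test curve through a slice $\{x=y\}$ at a nearby $Q_n$-point (the paper's Theorem \ref{thm-Dense Countable Family Comparison to Taxi}), the lower bound from $f_n\ge c$ (Theorem \ref{thm-Bounded warping function}), eventual equicontinuity (Theorem \ref{thm-Almost Lipschitz Implies Eventually Equicontinuity}), an eventually-equicontinuous Arzel\`a--Ascoli step (Theorem \ref{thm- Arzela Ascoli Theorem 2}), and passage of the bounds to the limit. The only cosmetic difference is that you fold the paper's intermediate taxi-metric comparison into a single Cauchy--Schwarz estimate and absorb its case analysis on whether the anchor $y$ lies between $x(p)$ and $x(q)$ into the crude triangle bound $|x_p-y|+|y-x_q|\le |x_p-x_q|+2C_n$.
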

Again, we remind the reader that Example \ref{ex-s_n Converge Quotient Metric Space}, and Example \ref{ex-h_n Converge Quotient Metric Space} show that we cannot make a weaker assumption on the lower bound of the sequence of warping functions and Example \ref{ex-v_n Converges to a Metic with Different Topology} shows that if we do not have bounds on a dense countable family of subsets, then we also cannot expect the conclusions of Theorem \ref{thm-Main Thm 2} to hold. So the hypotheses of Theorem \ref{thm-Main Thm 2} are necessary for the desired conclusion.

In Section \ref{sec-Background}, we provide background on warped product lengths spaces, uniform convergence, Lipschitz bounds, the Arzel\`{a}-Ascoli theorem, and what we call almost Lipschitz bounds with important consequences. Many important foundational results are established in this section and examples of metric spaces needed throughout the rest of the paper are described in detail.

In Section \ref{sec-Examples}, we provide many examples of sequences of warped product length spaces which serve to build intuition for the reader as well as justify and explain the hypotheses given in the two main theorems. In particular, the examples show that one cannot assume weaker hypotheses and obtain the same conclusions of the two main theorems of this paper.

In Section \ref{sec-Main Proofs}, we give the proofs of the main theorems.

\textbf{Acknowledgements:} This research was conducted by Lehman college undergraduate math majors under the direction of Assistant Professor Brian Allen. We would like to thank Rafael Gonzalez, the Mathematics and Statistics Student Success Center - MS3, and the S-Stem grant for funding Yaihara Torres for the summer. We would also like to thank the PSC-CUNY Award System for awarding Brian Allen a Traditional B grant which funded Bryan Sanchez for the summer. We would also like to thank the STEM-IN grant program in the school of Natural and Social Sciences at Lehman for funding Brian Allen for the summer.

\section{Background}\label{sec-Background}
Here, we remind the reader of material necessary to understand the remainder of the paper.

\subsection{Warped Product Length Spaces}

Here, we want to consider $(\Sigma,\sigma)$ a smooth, compact Riemannian manifold and $M=[t_0,t_1]\times \Sigma$, $t_0, t_1 \in \R$, $t_0 < t_1$. Let $\alpha:[0,1]\rightarrow \Sigma$ be a piecewise smooth curve and define a piecewise smooth curve $\gamma:[0,1]\rightarrow M$ given by $\gamma(t)=(x(t),\alpha(t))$ where $x:[0,1] \rightarrow [t_0,t_1]$. For points $p \in M$, we define the notation $p=(x(p),p_{\Sigma})$ where $x(p) \in [t_0,t_1]$ and $p_{\Sigma} \in \Sigma$. 

If $f:[t_0,t_1]\rightarrow (0,\infty)$ is a bounded, measurable function, we define the length of $\gamma$ with respect to the warping function $f$ to be
\begin{align}\label{def-Warped Product Length Formula}
    L_f(\gamma)&= \int_0^1 \sqrt{x'(t)^2+f(x(t))^2\sigma(\alpha'(t),\alpha'(t))}dt.
\end{align}
Notice that the length is defined when $f$ is bounded and measurable and we do not have to require that $f$ is smooth or continuous. 

Once one has defined a way of measuring the lengths of curves we can use this to define a distance function in a standard way for $p,q \in M$
\begin{align}\label{def-WeightedDistance}
    d_f(p,q)=\inf\{L_f(\gamma): \gamma \text{ piecewise smooth}, \gamma(0)=p,\gamma(1)=q\}.
\end{align}

We also define some important distance functions for $p,q \in M$ (See \cite{BBI})
\begin{align}
    d(p,q)&=d_1(p,q)=\sqrt{|x(q)-x(p)|^2+d_{\sigma}(p_{\Sigma},q_{\Sigma})^2},\label{def-Generalized Euclidean Distance}
    \\d_{taxi}(p,q)&= |x(q)-x(p)|+d_{\sigma}(p_{\Sigma},q_{\Sigma}),\label{def-Generalized Taxi Distance}
\end{align}
where $d_{\sigma}$ is the standard distance function defined on the smooth Riemannian manifold $(\Sigma, \sigma)$. We also remind the reader of the relationship between these two standard metric spaces
\begin{align}\label{eq-Taxi to Euclidean}
    d(p,q) \le d_{taxi}(p,q) \le \sqrt{2} d(p,q), \quad \forall p,q \in M.
\end{align}

We also define the standard product metric space on $M \times M$ to be
\begin{align}\label{def-Standard Product Metric M times M}
    d_{M\times M}((p,q),(p',q'))=\sqrt{d(p,p')^2+d(q,q')^2},
\end{align}
where we note that
\begin{align}\label{Eq-Product Metric to Product Taxi Inequality}
   d_{M\times M}((p,q),(p',q')) \le d(p,p')+d(q,q'). 
\end{align}

Here, we can generalize the notion of a line connecting two points $p,q \in M$ and define $\ell_{pq}:[0,1]\rightarrow M$ as
\begin{align}\label{def-Generalized Line}
    \ell_{pq}(t)=((x(q)-x(p))t+x(p), \alpha(t)),
\end{align}
where $\alpha:[0,1]\rightarrow \Sigma$ is the distance realizing curve connecting $p_{\Sigma}$ to $q_{\Sigma}$ inside $(\Sigma, \sigma)$. We will often take advantage of these generalized lines in Section \ref{sec-Examples} and Section \ref{sec-Main Proofs}.

We now give a straight forward estimate for comparing warped product length space distance functions which we will use frequently.

\begin{thm}\label{thm-Distance Lower Bound Estimate}
    Let $f,g:[t_0,t_1]\rightarrow (0,\infty)$ be functions so that $f(x) \ge g(x)$ $\forall x \in [t_0,t_1]$ then
    \begin{align}
        d_g(p,q) \le d_f(p,q),
    \end{align}
    for any $p,q \in M$.
\end{thm}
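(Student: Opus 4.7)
The plan is to establish the inequality curve by curve, then pass to the infimum. Given any piecewise smooth curve $\gamma:[0,1]\to M$ with $\gamma(t)=(x(t),\alpha(t))$, I would first show the pointwise length comparison $L_g(\gamma)\le L_f(\gamma)$. This reduces to a trivial comparison under the integral sign in \eqref{def-Warped Product Length Formula}: since $f(x(t))\ge g(x(t))>0$ and $\sigma(\alpha'(t),\alpha'(t))\ge 0$ (because $\sigma$ is a Riemannian metric on $\Sigma$), one has
\begin{align*}
x'(t)^2+g(x(t))^2\sigma(\alpha'(t),\alpha'(t))\le x'(t)^2+f(x(t))^2\sigma(\alpha'(t),\alpha'(t)),
\end{align*}
and the square root function is monotone.

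Integrating over $[0,1]$ yields $L_g(\gamma)\le L_f(\gamma)$ for every admissible $\gamma$. The conclusion then follows from the elementary fact that if $A(\gamma)\le B(\gamma)$ for every $\gamma$ in a common index set, then $\inf A\le \inf B$; applying this to the set of piecewise smooth curves joining $p$ to $q$ and using definition \eqref{def-WeightedDistance} gives
\begin{align*}
d_g(p,q)=\inf_{\gamma}L_g(\gamma)\le \inf_{\gamma}L_f(\gamma)=d_f(p,q).
\end{align*}

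There is essentially no obstacle here; the proof is a direct unwinding of definitions. The only point one should be careful about is the sign of $\sigma(\alpha',\alpha')$, which must be non-negative in order for the inequality $f^2\ge g^2$ (valid because $f,g>0$) to translate into the desired inequality between integrands. Since $\sigma$ is a genuine Riemannian metric on $\Sigma$, this is automatic, so no hypothesis beyond what is already stated is needed.
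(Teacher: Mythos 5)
Your proof is correct and is essentially identical to the paper's: both compare the integrands pointwise using monotonicity of the square root together with $f\ge g>0$ and $\sigma(\alpha',\alpha')\ge 0$, integrate to get $L_g(\gamma)\le L_f(\gamma)$ for every admissible curve, and then pass to the infimum over curves joining $p$ to $q$. No gaps, and no meaningful difference in approach.
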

\begin{proof}
   First we notice
   \begin{align}
       L_f(\gamma)&= \int_0^1 \sqrt{x'(t)^2+f(x(t))^2y'(t)^2}dt
       \\&\ge\int_0^1 \sqrt{x'(t)^2+g(x(t))^2y'(t)^2}dt=L_g(\gamma).
   \end{align}
   Since this is true for all piecewise smooth $\gamma$ connecting $p$ to $q$ we see that
   \begin{align}
       d_f(p,q) &=\inf\{L_f(\gamma): \gamma \text{ piecewise smooth}, \gamma_n(0)=p,\gamma_n(1)=q\}
       \\&\ge \inf\{L_g(\gamma): \gamma \text{ piecewise smooth}, \gamma_n(0)=p,\gamma_n(1)=q\}=d_g(p,q).
   \end{align}
\end{proof}

We also note that if one allows $f:[t_0,t_1]\rightarrow [0,\infty)$ then one can still obtain a well defined degenerate metric space where the function will not be positive definite but will be non-negative, symmetric, and satisfy the triangle inequality. As an example of degenerate warped product length space, we give the following degenerate example which will be used frequently in Section \ref{sec-Examples}.

\begin{ex}\label{ex-Quotient Metric Space}
Consider the warping function   
\begin{align}
    s_{\infty}=
    \begin{cases}
       0 & x=0
       \\ 1 & 0<x\le 1
    \end{cases}  ,  
    \end{align}
    which defines a degenerate metric $d_{s_{\infty}}$. We can then define a metric  $\hat{d}_{s_{\infty}}=d_{s_{\infty}}/\sim$ on the set $[0,1]^2$ where we define $P=\{(0,y):y \in [0,1]\}$ to be one point. Then
    \begin{align}
        \hat{d}_{s_{\infty}}(p,q)=\min\{L_{s_{\infty}}(\ell_{pq}), L_{s_\infty}(\beta)\},
    \end{align}
    where $\ell_{pq}$ is the straight line between $p=(x_1,y_1)$ and $q=(x_2,y_2)$ and 
    \begin{align}
        \beta(t)=
        \begin{cases}
           (x_1(1-t),y_1)&t \in [0,1]
           \\ (0,(y_2-y_1)(t-1)+y_1)&t\in[1,2]
           \\ (x_2(t-2),y_2)&t \in [2,3]
        \end{cases}.
    \end{align}
\end{ex}
\begin{proof}
    Let $p,q \in [0,1]^2$ and $\gamma:[0,1]\rightarrow [0,1]^2$ be a piecewise smooth curve connecting $p$ to $q$. We will proceed by considering two cases.

\textbf{Case 1: }$\gamma \cap P \not = \emptyset$

Consider the path $\gamma (t) = (x(t), y(t))$ $\forall t \in [0,1]$. Then let $[a, b] \in [0,1]$ such that $a \in [0,1]$ is the first time so that $\gamma_n(a) \in P$ and $b \in [0,1]$ is the last time such that $\gamma_n(b)\in P$. Let $p=(x_1,y_1)$, $r=(0,y_1)$, $s=(0,y_2)$, and $q=(x_2,y_2)$ and note that $d(p,r)=x_1$ and $d(s,q)=x_2$. Then we can calculate
    \begin{align}
        L_{s_\infty}(\gamma)
        &= L_{s_\infty}(\gamma |_{[0,a]}) + L_{s_\infty}(\gamma |_{[a,b]}) + L_{s_\infty}(\gamma |_{[b,1]})
        \\&=\int_0^a \sqrt{x'(t)^2 +y'(t)^2} dt + \int_a^b \sqrt{x'(t)^2 +s_{\infty}(x(t)) y'(t)^2} dt 
        \\&\quad + \int_b^1 \sqrt{x'(t)^2 +y'(t)^2} dt
        \\&\ge\int_0^a \sqrt{x'(t)^2 +y'(t)^2} dt + \int_b^1 \sqrt{x'(t)^2 +y'(t)^2} dt
        \\&\geq d(p,r) + d(s,q)
        \\&=x_1 + x_2=L_{s_\infty}(\beta).
    \end{align}
Thus, $\beta (t)$ is the shortest path for any path $\gamma (t) \cap P \neq \emptyset$.

\textbf{Case 2:} $\gamma \cap P  = \emptyset$

Since $s_\infty(x) = 1$ for $x \in (0,1]$, we know that the length of $\gamma$ with respect to $s_{\infty}$ is given by the Euclidean length. Thus, the straight line $\ell_{pq}$ connecting $p$ to $q$ is the shortest path in this class 
    \begin{align}
        L_{s_\infty}(\gamma) \ge L_{s_\infty}(\ell _{pq}).
    \end{align}
    
As a result of Case 1 and Case 2, we notice 
\begin{align}
    \hat{d}_{s_\infty}(p,q) &= \inf\{L_{s_{\infty}}(\gamma):\gamma(0)=p,\gamma(1)=q\} 
    \\&\ge \min\{L_{s_\infty}({\ell_{pq})}, L_{s_\infty}(\beta)\} \ge \hat{d}_{s_\infty}(p,q),
\end{align} 
and hence $ \hat{d}_{s_\infty}(p,q) =\min\{L_{s_\infty}({\ell_{pq})}, L_{s_\infty}(\beta)\})$.
\end{proof}

Now we notice that if one allows $f:[t_0,t_1]\rightarrow (0,\infty]$, then one can still obtain a well defined metric space which will not have the same topology of a warped product length space defined with respect to a piecewise continuous non-negative warping function. We note that where the warping function is infinity we are not allowed to travel in any direction besides horizontally. As an example of a metric space of this type, we give the following example which will be used in Section \ref{sec-Examples}.

\begin{ex}\label{ex-Blow Up Example Metric Description}
    If one defines a metric with respect to the warping function
    \begin{align}
    v_{\infty}(x)=
    \begin{cases}
        \infty & 0\le x < \frac{1}{2}
        \\ 1 & \frac{1}{2}\le x \le 1
    \end{cases},
\end{align}
then if $p,q \in [0,1]^2$, $p=(x_1,y_1)$, and $q=(x_2,y_2)$ we can describe the distance function on $[0,1]^2$ as
\begin{align}
d_{v_{\infty}}(p,q)&=
    \begin{cases}
        |x_1-\frac{1}{2}|+|y_2-y_1|+|x_2-\frac{1}{2}| & \text{ if } x_1,x_2< \frac{1}{2}, y_1\not = y_2
       \\ |x_2-x_1| & \text{ if } x_1,x_2< \frac{1}{2}, y_1 = y_2
        \\ |x_1-\frac{1}{2}|+d((\frac{1}{2},y_1),q) & \text{ if } x_1< \frac{1}{2}, x_2\ge \frac{1}{2}
        \\ |x_2-\frac{1}{2}|+d(p,(\frac{1}{2},y_2)) & \text{ if } x_1\ge \frac{1}{2}, x_2<\frac{1}{2}
        \\ d(p,q) & \text{ if } x_1,x_2\ge \frac{1}{2}
    \end{cases},
\end{align}
which does not have the same topology as the Euclidean metric or any other warped product length space with a piecewise continuous and non-negative warping function.
\end{ex}
\begin{proof}
        Let $p=(x_1,y_1)$ and $q=(x_2,y_2)$ and consider the following cases in order to determine an explicit expression for the distance function.

    \textbf{Case 1:} $x_1,x_2 \ge \frac{1}{2}$

    In this case $p$ and $q$ are contained in the part of $[0,1]^2$ where the warping factor is Euclidean and so the shortest path between two points is the straight line distance and hence $d_{v_{\infty}}(p,q)=d(p,q)$, as desired.

    \textbf{Case 2:} $x_1 < \frac{1}{2}$ or $x_2< \frac{1}{2}$

    Hence, any curve $\gamma$ connecting $p$ to $q$ must travel through the region of $[0,1]^2$ where $x<\frac{1}{2}$. By definition of the warping factor, if there is any component of $\gamma$ which travels in the vertical direction, then the length will be infinite. So the only curve which could possibly not be infinite is a curve which travels purely horizontally.

    \textbf{Case 3:} $x_1,x_2< \frac{1}{2}$ and $y_1=y_2$ 
    Here, the horizontal line will have finite length and be the shortest curve which implies $d_{v_{\infty}}(p,q)=|x_2-x_1|$.

    \textbf{Case 4:} $x_1,x_2< \frac{1}{2}$ and $y_1\not=y_2$ 
    
    Then, in order to define a curve with finite length, one must travel horizontally from $p$ and $q$ until one reaches the region in $[0,1]^2$ where $x \ge \frac{1}{2}$. Then, the shortest path from $(\frac{1}{2},y_1)$ to $(\frac{1}{2},y_2)$ is the horizontal line with length $|y_2-y_1|$ and the entire distance is given by $d_{v_{\infty}}(p,q)=|x_1-\frac{1}{2}|+|y_2-y_1|+|x_2-\frac{1}{2}|$.

    \textbf{Case 5:} $x_1< \frac{1}{2}$ and $x_2 \ge \frac{1}{2}$, or $x_1\ge \frac{1}{2}$ and $x_2 < \frac{1}{2}$ 
    
    Here, one must travel horizontally first until one reaches the region inside $[0,1]^2$ where $x \ge \frac{1}{2}$ and then travel along the straight line by Case 1. So one obtains either $d_{v_{\infty}}(p,q)=|x_1-\frac{1}{2}|+d((\frac{1}{2},y_1),q)$ or $d_{v_{\infty}}(p,q)=|x_2-\frac{1}{2}|+d(p,(\frac{1}{2},y_2))$.

    In order to see that $d_{v_{\infty}}$ defines a different topology then Euclidean space, we can consider $p=(\frac{1}{4},\frac{1}{4})$ and notice that the open ball of radius $\frac{1}{8}$ around $p$ with respect to $d_{v_{\infty}}$ is $\{(x,y) \in [0,1]^2: y=\frac{1}{4}, \frac{3}{8} < x < \frac{5}{8}\}$ which is not open in the topology defined by Euclidean space.
\end{proof}

\subsection{Uniform Convergence}

First, we remind the reader of the definition of uniform convergence for a sequence of distance functions.

\begin{defn}\label{defn-Uniform Convergence}
    Let $d_j:X \times X \rightarrow (0,\infty)$ be a sequence of metrics and $d_0:X \times X\rightarrow (0,\infty)$ a metric. Then we say that $d_j \rightarrow d_0$ \textbf{uniformly} if 
    \begin{align}
        \sup_{p,q \in X} \left|d_j(p,q)-d_0(p,q) \right|\rightarrow 0.
    \end{align}
\end{defn}

\begin{rmrk}
    The definition of uniform convergence given in Definition \ref{defn-Uniform Convergence} is equivalent to the definition of convergence for a sequence in the metric space $(\mathcal{C},d_{sup})$ if we let $\mathcal{C}$ be the set of continuous functions from $X \times X \rightarrow (0,\infty)$ and for $f,g \in \mathcal{C}$ we define the distance between points to be 
    \begin{align}
        d_{sup}(f,g)= \sup_{p,q \in X}|f(p,q)-g(p,g)|.
    \end{align}
\end{rmrk}
We now prove a routine squeeze theorem for uniform convergence of distance functions which will be used throughout the paper.

\begin{thm}\label{thm-Squeeze}
     Let $d_j:X \times X \rightarrow (0,\infty)$ be a sequence of metrics and $d_0:X \times X\rightarrow (0,\infty)$ a metric. If for all $p,q \in X$ we find that
     \begin{align}\label{eq-squeeze equation}
         d_0(p,q) - C_j \le d_j(p,q) \le d_0(p,q)+C_j,
     \end{align}
     where $C_j$ is a sequence of real numbers so that $C_j \rightarrow 0$ then $d_j \rightarrow d_0$ uniformly.
\end{thm}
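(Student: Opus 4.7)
The plan is to rearrange the squeeze inequality into an absolute value bound, take a supremum over pairs of points in $X$, and then invoke Definition \ref{defn-Uniform Convergence} directly. The statement is essentially a bookkeeping exercise: the hypothesis already controls $d_j(p,q) - d_0(p,q)$ pointwise by the sequence $C_j$, and the definition of uniform convergence asks for the same control after taking a supremum.

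Concretely, I would first subtract $d_0(p,q)$ from all three parts of the inequality in \eqref{eq-squeeze equation} to obtain
\begin{align}
    -C_j \le d_j(p,q) - d_0(p,q) \le C_j,
\end{align}
which is exactly the statement $|d_j(p,q) - d_0(p,q)| \le C_j$ for every $p,q \in X$. Since the right-hand side is independent of $p$ and $q$, I can take the supremum over pairs $(p,q) \in X \times X$ to conclude that
\begin{align}
    \sup_{p,q \in X} \left| d_j(p,q) - d_0(p,q) \right| \le C_j.
\end{align}

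The final step is to let $j \to \infty$. Since $C_j \to 0$ by hypothesis, the squeeze principle for real-valued sequences forces $\sup_{p,q \in X} |d_j(p,q) - d_0(p,q)| \to 0$, which matches Definition \ref{defn-Uniform Convergence} verbatim and gives $d_j \to d_0$ uniformly.

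There is really no main obstacle here beyond making sure the logical order is clean: the inequality must be rearranged \emph{before} the supremum is taken, since the bound $C_j$ does not depend on $(p,q)$ and so survives the supremum unchanged. No subtler tools (e.g., completeness of the space of continuous functions under $d_{\text{sup}}$, or any properties of $X$) are needed.
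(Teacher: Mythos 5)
Your proof is correct and follows essentially the same approach as the paper's: rearrange the two-sided bound into $|d_j(p,q)-d_0(p,q)|\le C_j$, take the supremum over $p,q$, and apply the squeeze theorem for real sequences together with Definition~\ref{defn-Uniform Convergence}. No meaningful differences.
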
 
\begin{proof}
By rearranging \eqref{eq-squeeze equation}, we find $\forall p,q \in [0,1]^2$ 
  \begin{align}\label{eq-squeeze equation 2}
          - C_j \le d_j(p,q)-d_0(p,q) \le C_j,
     \end{align}
     which can be further rearranged so that
     \begin{align}\label{eq-squeeze equation 3}
          0\le |d_j(p,q)-d_0(p,q)| \le C_j.
     \end{align}
     Since \eqref{eq-squeeze equation 3} is true $\forall p,q \in [0,1]^2$ we find
     \begin{align}
           0\le\sup\{ |d_j(p,q)-d_0(p,q)|:p,q \in [0,1]^2\} \le C_j.
     \end{align}
     Hence, by the squeeze theorem for sequences, we find
     \begin{align}
           \sup\{ |d_j(p,q)-d_0(p,q)|:p,q \in [0,1]^2\} \rightarrow 0,
     \end{align}
     which implies that $d_j \rightarrow d_0$ uniformly by Definition \ref{defn-Uniform Convergence}
\end{proof}

\subsection{Lipschitz Bounds and the Arzel\`{a}-Ascoli Theorem}
We first review the definitions required to state the Arzel\`{a}-Ascoli Theorem.

\begin{defn}\label{def-Equicontinuity}
    We say that a sequence of functions $f_j:[0,1]\rightarrow (0,\infty)$ defining metric spaces $d_{f_j}:[0,1]^2\times [0,1]^2 \rightarrow (0,\infty)$ is \textbf{equicontinuous} if  $\forall \varepsilon>0$ $\exists \delta>0$ s.t $\forall (p,q),(p',q') \in M\times M$ $\forall j \in \N$ if $d_{M\times M}((p,q),(p',q'))< \delta$ then 
    \begin{align}
        |d_{f_j}(p,q)-d_{f_j}(p',q')|<\varepsilon.
    \end{align}
\end{defn}

We now show that a Lipschitz bound implies equicontinuity.

 \begin{thm}\label{thm-Lipschitz Implies Equicontinuity}
    Let $f_j:[t_0,t_1]\rightarrow (0,\infty)$ be a sequence of functions defining metric spaces $d_{f_j}:M\times M \rightarrow (0,\infty)$. If $\exists C>0$ so that
    \begin{align}
     d_{f_j}(p,q) \le C d(p,q), \forall p,q \in M, \quad \forall j \in \N,
    \end{align}
    then $d_{f_j}$ is equicontinuous.
\end{thm}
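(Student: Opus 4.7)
The plan is to derive the equicontinuity estimate directly from the Lipschitz bound by using the quadrilateral inequality (two applications of the triangle inequality for $d_{f_j}$) and then converting the resulting sum of distances into the product metric $d_{M\times M}$.

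First I would fix $(p,q),(p',q')\in M\times M$ and write
\begin{align}
    |d_{f_j}(p,q)-d_{f_j}(p',q')| \le d_{f_j}(p,p')+d_{f_j}(q,q'),
\end{align}
which follows from applying the triangle inequality for the metric $d_{f_j}$ twice (comparing $d_{f_j}(p,q)$ first to $d_{f_j}(p',q)$ and then to $d_{f_j}(p',q')$). Next I would invoke the uniform Lipschitz hypothesis to replace each term on the right:
\begin{align}
    |d_{f_j}(p,q)-d_{f_j}(p',q')| \le C\bigl(d(p,p')+d(q,q')\bigr).
\end{align}

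The final step is to bound the sum $d(p,p')+d(q,q')$ by the product metric. Using the elementary inequality $a+b\le\sqrt{2}\sqrt{a^2+b^2}$ together with the definition \eqref{def-Standard Product Metric M times M} of $d_{M\times M}$, I get
\begin{align}
    d(p,p')+d(q,q')\le \sqrt{2}\,d_{M\times M}((p,q),(p',q')),
\end{align}
so that
\begin{align}
    |d_{f_j}(p,q)-d_{f_j}(p',q')| \le \sqrt{2}\,C\, d_{M\times M}((p,q),(p',q')).
\end{align}
Given any $\varepsilon>0$, choosing $\delta=\varepsilon/(\sqrt{2}C)$ then yields $|d_{f_j}(p,q)-d_{f_j}(p',q')|<\varepsilon$ whenever $d_{M\times M}((p,q),(p',q'))<\delta$, uniformly in $j$, which is precisely Definition \ref{def-Equicontinuity}.

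There is no real obstacle here; the result is essentially a sharper restatement of the fact that a uniformly Lipschitz family is automatically uniformly equicontinuous. The only small point worth care is picking the right conversion between $d(p,p')+d(q,q')$ and $d_{M\times M}$ so that one obtains a genuine modulus of continuity with respect to the product metric (as used later for Arzel\`{a}-Ascoli), rather than leaving the estimate in terms of the non-metric quantity $d(p,p')+d(q,q')$.
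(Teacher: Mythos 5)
Your proposal is correct and follows essentially the same route as the paper: both split $|d_{f_j}(p,q)-d_{f_j}(p',q')|$ via the quadrilateral (two triangle) inequality into $d_{f_j}(p,p')+d_{f_j}(q,q')$, apply the Lipschitz bound, and compare to $d_{M\times M}$. The only cosmetic difference is that the paper bounds each of $d(p,p')$ and $d(q,q')$ by $\max\{d(p,p'),d(q,q')\}\le d_{M\times M}$ and takes $\delta=\varepsilon/(2C)$, whereas you use $a+b\le\sqrt{2}\sqrt{a^2+b^2}$ to obtain the marginally sharper Lipschitz constant $\sqrt{2}C$ and $\delta=\varepsilon/(\sqrt{2}C)$.
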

\begin{proof}
 Let $\varepsilon>0$ given and we will make a choice for $\delta>0$. First note that by  \eqref{def-Standard Product Metric M times M} we see that if $d_{M \times M}((p,q),(p',q')) \le \delta$ then
 \begin{align}
    \delta \ge  \sqrt{ d(p,p')^2+d(q,q')^2} \ge \max\{d(p,p'),d(q,q')\},
 \end{align}
 and hence $d(p,p')\le \delta$ and $d(q,q') \le \delta$.

 Now we calculate
 \begin{align}\label{eq-First Important Observation}
 \begin{split}
     |d_{f_j}(p,q)-d_{f_j}(p',q')| &= |d_{f_j}(p,q)-d_{f_j}(p,q')+d_{f_j}(p,q')-d_{f_j}(p',q')|
     \\&\le |d_{f_j}(p,q)-d_{f_j}(p,q')|+|d_{f_j}(p,q')-d_{f_j}(p',q')|.     
 \end{split}
 \end{align}
 Now by applying the consequence of the triangle inequality, $|d(p,q)-d(q,q')| \le d(p,q')$ for any metric space, to \eqref{eq-First Important Observation} we see 
  \begin{align}\label{eq-Second Important Observation}
     |d_{f_j}(p,q)-d_{f_j}(p',q')| &\le d_{f_j}(q,q')+d_{f_j}(p,p').
 \end{align}
 Hence, if we choose $\delta=\frac{\varepsilon}{2C}$ we see by \eqref{eq-Second Important Observation} and the hypothesized distance comparison that
  \begin{align}\label{eq-Second Important Observation}
     |d_{f_j}(p,q)-d_{f_j}(p',q')| &\le Cd(q,q')+C d(p,p') \le C\frac{\varepsilon}{2C}+C\frac{\varepsilon}{2C}=\varepsilon.
 \end{align}
\end{proof}

We now remind the reader of the Arzel\`{a}-Ascoli Theorem which we will aim to apply in order to prove Theorem \ref{thm-Main Thm 1}.

\begin{thm}[Arzel\`{a}-Ascoli Theorem]\label{thm-Arzela Ascoli Theorem}
   Let $d_{f_j}$ be a sequence of bounded and equicontinuous metrics defined on $M$. Then $d_{f_j}$ has a uniformly converging subsequence.
\end{thm}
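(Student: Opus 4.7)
The plan is to imitate the classical proof of the Arzelà--Ascoli theorem for real-valued functions, but applied to the functions $d_{f_j}$ viewed as elements of $C(M\times M,\R)$. First I would observe that $M=[t_0,t_1]\times\Sigma$ is compact (product of a compact interval and a compact Riemannian manifold), so $M\times M$ endowed with the product metric $d_{M\times M}$ from \eqref{def-Standard Product Metric M times M} is a compact separable metric space. Choose once and for all a countable dense subset $\{(p_k,q_k)\}_{k=1}^\infty\subset M\times M$.

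Next, I would carry out a standard Cantor diagonal extraction. For each fixed $k$, the sequence of real numbers $\{d_{f_j}(p_k,q_k)\}_{j\in\N}$ lies in a bounded interval by the boundedness hypothesis, so Bolzano--Weierstrass provides a convergent subsequence. Performing this extraction successively for $k=1,2,3,\ldots$ and then passing to the diagonal subsequence, I obtain a subsequence (still denoted $d_{f_j}$) that converges pointwise on the dense set $\{(p_k,q_k)\}$ to some values $d_\infty(p_k,q_k)$.

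Then I would upgrade pointwise convergence on the dense subset to uniform convergence on all of $M\times M$, using equicontinuity. Given $\varepsilon>0$, pick $\delta>0$ from Definition~\ref{def-Equicontinuity} so that $d_{M\times M}((p,q),(p',q'))<\delta$ forces $|d_{f_j}(p,q)-d_{f_j}(p',q')|<\varepsilon/3$ uniformly in $j$. By compactness of $M\times M$, finitely many of the dense points, say $(p_{k_1},q_{k_1}),\ldots,(p_{k_N},q_{k_N})$, have their $\delta$-balls cover $M\times M$. Since the subsequence converges at each of these $N$ points, it is Cauchy there, so there exists $J$ such that for all $j,m\ge J$ and all $i=1,\ldots,N$,
\begin{align*}
|d_{f_j}(p_{k_i},q_{k_i})-d_{f_m}(p_{k_i},q_{k_i})|<\varepsilon/3.
\end{align*}
Given an arbitrary $(p,q)\in M\times M$, pick $i$ with $d_{M\times M}((p,q),(p_{k_i},q_{k_i}))<\delta$ and estimate $|d_{f_j}(p,q)-d_{f_m}(p,q)|$ by inserting $d_{f_j}(p_{k_i},q_{k_i})$ and $d_{f_m}(p_{k_i},q_{k_i})$, applying equicontinuity twice and the pointwise Cauchy estimate once, to get an upper bound of $\varepsilon$.

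This shows the diagonal subsequence is uniformly Cauchy on $M\times M$; since the space of bounded functions on $M\times M$ is complete under the sup-metric $d_{\sup}$ introduced after Definition~\ref{defn-Uniform Convergence}, it converges uniformly to a limit function $d_\infty$. I do not anticipate any real obstacle here; the only delicate point is the $\varepsilon/3$-argument in Step~3, where one must be careful to apply equicontinuity uniformly in $j$ (which is exactly what Definition~\ref{def-Equicontinuity} provides) and to use compactness to reduce the dense subset to a finite $\delta$-net. Verifying that the limit $d_\infty$ satisfies the metric axioms (symmetry, triangle inequality, non-negativity, and positivity) is not required by this statement but follows easily by passing these identities and inequalities pointwise to the limit; it is treated in the proofs of Theorems~\ref{thm-Main Thm 1} and~\ref{thm-Main Thm 2}.
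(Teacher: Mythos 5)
The paper states this Arzel\`{a}--Ascoli theorem without proof, treating it as standard background material; there is therefore no ``paper's own proof'' to compare against. Your proposed proof is the correct classical argument: Cantor diagonalization using boundedness to extract a pointwise-convergent subsequence on a countable dense subset of the compact space $M\times M$, followed by the $\varepsilon/3$ argument that uses equicontinuity (uniform in $j$) together with a finite $\delta$-net from compactness to upgrade pointwise convergence on the dense set to a uniform Cauchy estimate, and finally completeness of $(\mathcal{C},d_{\sup})$ to obtain the uniform limit. All the steps are correctly ordered and justified, and the cautionary remark you make about applying equicontinuity uniformly in $j$ is exactly the point where Definition~\ref{def-Equicontinuity} is needed. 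This is essentially the same argument the paper implicitly invokes when it later remarks, in the proof of Theorem~\ref{thm- Arzela Ascoli Theorem 2}, that ``the usual proof of the Arzel\`{a}--Ascoli Theorem works in exactly the same way'' under the weaker hypothesis of eventual equicontinuity.
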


\subsection{Almost Lipschitz Bounds and Eventually Equicontinuous}
In this subsection we want to define a notion that we call eventually equicontinuous for a sequence. We note that the intuition for this definition comes from the observation that we should only need equicontinuity for the tail of a sequence when proving the Arzela-Ascol\'{i} Theorem. 

\begin{defn}\label{defn-Eventually Equicontinuous Metrics}
  We say that a sequence of functions $f_j:[t_0,t_1]\rightarrow (0,\infty)$ defining metric spaces $d_{f_j}:M\times M \rightarrow (0,\infty)$ is \textbf{eventually equicontinuous} if  $\forall \varepsilon>0$ $\exists \delta>0, N \in \N$ s.t $\forall (p,q),(p',q') \in M\times M$ $\forall j \ge N$ if $d_{M \times M}((p,q),(p',q'))< \delta$ then 
    \begin{align}
        |d_{f_j}(p,q)-d_{f_j}(p',q')|<\varepsilon.
    \end{align}  
\end{defn}

We find it interesting to note that the Arzel\`{a}-Ascoli Theorem still holds under the weaker assumption of eventually equicontinuous. This does not seem to be surprising but since we will use it later we think it is important to make note of it now. Our aim is to apply Theorem \ref{thm- Arzela Ascoli Theorem 2} in order to prove Theorem \ref{thm-Main Thm 2}.

\begin{thm}\label{thm- Arzela Ascoli Theorem 2}
   Let $d_{f_j}$ be a sequence of bounded and eventually equicontinuous metrics defined on $M$. Then $d_{f_j}$ has a uniformly converging subsequence.
\end{thm}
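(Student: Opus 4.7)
The plan is to follow the standard diagonal extraction proof of Arzel\`{a}--Ascoli, replacing full equicontinuity with eventual equicontinuity only in the final uniform-convergence step. Since $M=[t_0,t_1]\times\Sigma$ is compact, so is $M\times M$, and hence $M\times M$ admits a countable dense subset $\{(p_k,q_k):k\in\N\}$.

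The first step would be to extract a subsequence converging pointwise on this dense set. For each fixed $k$, the real sequence $\{d_{f_j}(p_k,q_k)\}_{j\in\N}$ is bounded by hypothesis, hence has a convergent subsequence in $\R$. A Cantor diagonal argument then produces a single subsequence $d_{f_{j_m}}$ such that $d_{f_{j_m}}(p_k,q_k)$ converges in $\R$ for every $k\in\N$.

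The second step is to upgrade this to uniform convergence on $M\times M$ by showing that $d_{f_{j_m}}$ is uniformly Cauchy in the supremum norm. Given $\varepsilon>0$, I would invoke Definition \ref{defn-Eventually Equicontinuous Metrics} to produce $\delta>0$ and $N\in\N$ such that for every $j\ge N$, any two points $(p,q),(p',q')\in M\times M$ within $d_{M\times M}$-distance $\delta$ satisfy $|d_{f_j}(p,q)-d_{f_j}(p',q')|<\varepsilon$. By compactness of $M\times M$ I would then cover it by finitely many $d_{M\times M}$-balls of radius $\delta$ centered at points $(p_{k_1},q_{k_1}),\dots,(p_{k_K},q_{k_K})$ drawn from the dense set. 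Pointwise convergence at these finitely many centers yields some $N'\ge N$ with $|d_{f_{j_m}}(p_{k_i},q_{k_i})-d_{f_{j_{m'}}}(p_{k_i},q_{k_i})|<\varepsilon$ for all $m,m'\ge N'$ and $1\le i\le K$. A standard three-$\varepsilon$ argument, using eventual equicontinuity once on each side of the nearest center together with pointwise convergence in the middle, then gives $|d_{f_{j_m}}(p,q)-d_{f_{j_{m'}}}(p,q)|<3\varepsilon$ uniformly in $(p,q)\in M\times M$.

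The main subtlety I expect is index bookkeeping: to invoke eventual equicontinuity we need $j_m,j_{m'}\ge N$, which is automatic once $m,m'\ge N$ since $j_m\ge m$ for any extracted subsequence. Once $d_{f_{j_m}}$ is uniformly Cauchy on $M\times M$, completeness of the space of bounded real-valued functions on $M\times M$ in the supremum norm yields a uniform limit, giving the desired uniformly converging subsequence and finishing the proof.
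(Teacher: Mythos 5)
Your proposal is correct and matches what the paper does: the paper's proof of this theorem is simply the remark that ``the usual proof of the Arzel\`{a}-Ascoli Theorem works in exactly the same way here,'' and your write-up fills in exactly that standard diagonal-extraction and finite-cover argument, correctly isolating the only place the modification matters (restricting to indices $j \ge N$ in the uniform-Cauchy step) and observing that $j_m \ge m$ makes this automatic for the tail of the extracted subsequence.
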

\begin{proof}
    One can check that the usual proof of the Arzel\`{a}-Ascoli Theorem works in exactly the same way here.
\end{proof}

We now show that being almost Lipschitz, as demonstrated in the next Theorem, is enough to imply eventually equicontinuous. 

 \begin{thm}\label{thm-Almost Lipschitz Implies Eventually Equicontinuity}
    Let $f_j:[0,1]\rightarrow (0,\infty)$ be a sequence of functions defining metric spaces $d_{f_j}:[0,1]^2\times [0,1]^2 \rightarrow (0,\infty)$. If $\exists C, C_n>0$ so that
    \begin{align}
     d_{f_j}(p,q) \le C d(p,q)+C_n, \forall p,q \in M, \quad \forall j \in \N,
    \end{align}
    where $C_n\rightarrow 0$ as $n\rightarrow \infty$, then $d_{f_j}$ is eventually equicontinuous.
\end{thm}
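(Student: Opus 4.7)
The plan is to mimic the proof of Theorem~\ref{thm-Lipschitz Implies Equicontinuity} but split the error budget into a metric part and a tail part, using the hypothesis that the additive defect $C_n$ shrinks to zero. The key observation is that the consequence of the triangle inequality $|d_{f_j}(p,q)-d_{f_j}(p',q')|\le d_{f_j}(p,p')+d_{f_j}(q,q')$ used in the equicontinuity proof does not care whether the bound on $d_{f_j}$ is genuinely Lipschitz or only almost Lipschitz; it will simply transfer the extra constant through.

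First I would fix $\varepsilon>0$ and, as in Theorem~\ref{thm-Lipschitz Implies Equicontinuity}, observe that $d_{M\times M}((p,q),(p',q'))<\delta$ forces $d(p,p')<\delta$ and $d(q,q')<\delta$. Then I would apply the triangle inequality twice (adding and subtracting $d_{f_j}(p,q')$) to obtain
\begin{align}
|d_{f_j}(p,q)-d_{f_j}(p',q')|\le d_{f_j}(p,p')+d_{f_j}(q,q').
\end{align}
Applying the almost Lipschitz hypothesis to each term on the right yields
\begin{align}
|d_{f_j}(p,q)-d_{f_j}(p',q')|\le C\,d(p,p')+C\,d(q,q')+2C_j\le 2C\delta+2C_j.
\end{align}

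Next I would balance the two error sources. Since $C_n\to 0$, I can choose $N\in\N$ so large that $C_j<\varepsilon/4$ for all $j\ge N$, and then choose $\delta=\varepsilon/(4C)$. With these choices, whenever $d_{M\times M}((p,q),(p',q'))<\delta$ and $j\ge N$, the displayed estimate gives
\begin{align}
|d_{f_j}(p,q)-d_{f_j}(p',q')|\le 2C\cdot\frac{\varepsilon}{4C}+2\cdot\frac{\varepsilon}{4}=\varepsilon,
\end{align}
verifying Definition~\ref{defn-Eventually Equicontinuous Metrics}.

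There is not really a hard step here; the proof is essentially the Lipschitz one with an additive perturbation. The only thing worth being careful about is precisely why eventual (rather than full) equicontinuity is the correct conclusion: the constant $C_j$ cannot be absorbed into a global modulus of continuity for the finitely many early terms in the sequence unless one knows more about them, but the tail $j\ge N$ is all that is needed to feed into Theorem~\ref{thm- Arzela Ascoli Theorem 2}. This also explains why the notion of eventually equicontinuous was introduced in the first place.
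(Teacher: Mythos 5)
Your proof is correct and follows essentially the same route as the paper's: the same triangle-inequality decomposition $|d_{f_j}(p,q)-d_{f_j}(p',q')|\le d_{f_j}(p,p')+d_{f_j}(q,q')$, the same choice of $\delta=\varepsilon/(4C)$, and the same choice of $N$ so that the additive defect is below $\varepsilon/4$ in the tail. The only minor difference is cosmetic bookkeeping of the $2C_j$ term, and your closing remark about why eventual rather than full equicontinuity is the right conclusion is a useful clarification that the paper leaves implicit.
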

\begin{proof}
     Let $\varepsilon>0$ given and we will make a choice for $\delta>0$. First note that by  \eqref{def-Standard Product Metric M times M} we see that if $d_{M \times M}((p,q),(p',q')) \le \delta$ then
 \begin{align}
    \delta \ge  \sqrt{ d(p,p')^2+d(q,q')^2} \ge \max\{d(p,p'),d(q,q')\},
 \end{align}
 and hence $d(p,p')\le \delta$ and $d(q,q') \le \delta$.

 Now we calculate
 \begin{align}\label{eq-First Important Observation 2}
 \begin{split}
     |d_{f_j}(p,q)-d_{f_j}(p',q')| &= |d_{f_j}(p,q)-d_{f_j}(p,q')+d_{f_j}(p,q')-d_{f_j}(p',q')|
     \\&\le |d_{f_j}(p,q)-d_{f_j}(p,q')|+|d_{f_j}(p,q')-d_{f_j}(p',q')|.     
 \end{split}
 \end{align}
 Now by applying the consequence of the triangle inequality, $|d(p,q)-d(q,q')| \le d(p,q')$ for any metric space, to \eqref{eq-First Important Observation 2} we see 
  \begin{align}\label{eq-Second Important Observation}
     |d_{f_j}(p,q)-d_{f_j}(p',q')| &\le d_{f_j}(q,q')+d_{f_j}(p,p').
 \end{align}
 Hence, if we choose $\delta=\frac{\varepsilon}{4C}$ and $N \in \N$ so that $C_n \le \frac{\varepsilon}{4}$ for all $n \ge N$, we see by \eqref{eq-Second Important Observation} and the hypothesized distance comparison that
  \begin{align}\label{eq-Second Important Observation}
     |d_{f_j}(p,q)-d_{f_j}(p',q')| &\le Cd(q,q')+C_n+C d(p,p') +C_n 
     \\&\le C\frac{\varepsilon}{4C}+\frac{\varepsilon}{4}+C\frac{\varepsilon}{4C}+\frac{\varepsilon}{4}=\varepsilon.
 \end{align}
\end{proof}

\section{Examples}\label{sec-Examples}
It was observed by B. Allen and C. Sormani \cite{Allen-Sormani, Allen-Sormani-2} that one will need to impose stronger conditions on a sequence of Riemannian metrics to control the sequence of distance functions from below than to control the sequence of distance functions from above. In \cite{Allen-Sormani, Allen-Sormani-2} the authors were trying to prove convergence to specific Riemannian manifolds in the limit but the insight gained also applies to the case where one is trying to prove compactness. Here we will review similar examples and give some new examples which will explain all of the hypotheses made in the main theorems. All examples in this section will be given where $\Sigma=[0,1]$, $\sigma=dx^2$, and $M=[0,1]^2$. 

\subsection{Shortcut Examples}

We begin by reviewing shortcut examples which elucidate the fact that we will need to impose uniform, positive lower bounds on the sequence of warping functions in order to ensure that any convergent subsequence of distance functions will converge to a positive definite distance function with the same topology as the sequence.

\begin{ex}\label{ex-s_n Converge Quotient Metric Space} 
Define the sequence of warping functions
\begin{align}
    s_n(x)=
    \begin{cases}
        \frac{1}{n}& 0\le x \le \frac{1}{n}
        \\ 1 & \frac{1}{n}< x \le 1
    \end{cases}
\end{align}
  and consider the sequence of metric spaces,  $d_{s_n}$. Then $d_{s_n}$ converges uniformly to the metric space with warping function
    \begin{align}
    s_{\infty}=
    \begin{cases}
       0 & x=0
       \\ 1 & 0<x\le 1
    \end{cases}    
    \end{align}
    where we define a metric $\hat{d}_{s_{\infty}}=d_{s_{\infty}}/\sim$ where we define $P=\{(0,y):y \in [0,1]\}$ to be one point.
\end{ex}
Since the proof of this example was basically already given by B. Allen and C. Sormani in \cite{Allen-Sormani, Allen-Sormani-2} we omit the proof. We also note that the proof is very similar to the proof of the next example which we do include. The significance of the next example is that $h_n \rightarrow 1$ pointwise everywhere but yet $d_{h_n}$ does not even pointwise almost everywhere converge to $d$. This shows that pointwise convergence of the warping functions is not enough to establish any control from below and hence we will be forced to assume uniform bounds from below in the main theorems.

\begin{ex}\label{ex-h_n Converge Quotient Metric Space}
    If we define
    \begin{align}
    h_n(x)=
    \begin{cases}
    1 & 0\le x < \frac{1}{n}
       \\ \frac{1}{n}& \frac{1}{n}\le x \le \frac{2}{n}
        \\ 1 & \frac{2}{n}< x \le 1
    \end{cases},
\end{align}
then the sequence of metric spaces $d_{h_n}$ converges uniformly to the metric space with warping function
    \begin{align}
    s_{\infty}=
    \begin{cases}
       0 & x=0
       \\ 1 & 0<x\le 1
    \end{cases},    
    \end{align}
    where we define a metric  $\hat{d}_{s_{\infty}}=d_{s_{\infty}}/\sim$  on the set $[0,1]^2$ where we define $P=\{(0,y):y \in [0,1]\}$ to be one point.
\end{ex}
\begin{proof}
We start by showing that $d_{h_n}(p,q) \le d_{s_{\infty}}(p,q)+C_n$ where $C_n \rightarrow 0$ and $n \rightarrow \infty$.

\textbf{Case 1: $x_1,x_2 > 0$}

Choose $N \in \N$ large enough such that $\forall n \ge N$ we find $\frac{2}{n} < \min\{{x_1,x_2}\}$. So, if the line $\ell_{pq}$ connects $p$ to $q$, we find
\begin{align} \label{h2sl1above}
    d_{h_{n}}(p,q) \le L_{h_{n}}(\ell_{pq}) = L_{s_{\infty}}(\ell_{pq}) .
\end{align}

Now if we define
\begin{align}
    \rho_1(t)=
    \begin{cases}
        (x_1 +(\frac{2}{n}-x_1)t,y_1)&t\in[0,1]
        \\(\frac{2}{n}, y_1 +(y_2-y_1)(t-1) &t\in[1,2]
        \\(\frac{2}{n}+(x_2 -\frac{2}{n})(t-2), y_2) &t\in[2,3]
    \end{cases},
\end{align}
we can calculate
\begin{align} \label{h2sb1above}
    d_{h_{n}}(p,q) &\le L_{h_{n}}(\rho_1)
    \\&= |x_1 - \frac{2}{n}| + \frac{1}{n}|y_2-y_1| + |x_2 - \frac{2}{n}|
    \\&\le x_1 + x_2 + \frac{1}{n}|y_2-y_1|
    \\&\le L_{s_{\infty}}(\beta) + \frac{1}{n},
\end{align}
where $\beta$ is defined in Example \ref{ex-Quotient Metric Space}.

By combining \eqref{h2sl1above} and \eqref{h2sb1above}, we see
\begin{align}
    d_{h_{n}}(p,q) &\le \min\{L_{s_{\infty}}(\ell_{pq}), L_{s_{\infty}}(\beta) + \frac{1}{n}\}
    \\&\le \min\{L_{s_{\infty}}(\ell_{pq}), L_{s_{\infty}}(\beta)\} + \frac{1}{n}
    \\&= \hat{d}_{s_{\infty}}(p,q) + \frac{1}{n} ,\label{h2d1above}
\end{align}
where we applied Example \ref{ex-Quotient Metric Space} in \eqref{h2d1above}.

\textbf{Case 2: $x_1 = 0$ and $x_2 > 0$}

Choose $N \in \N$ large enough such that $\forall n \ge N$ we find $\frac{2}{n} < x_2$.
Consider the path $\rho_1:[0,2]\rightarrow [0,1]^2$ connecting points $p = (0,y_1)$ and $q = (x_2,y_2)$
    \begin{align}
        \rho_1(t)=
    \begin{cases}
        (\frac{2}{n}t,y_1) &t\in [0,1]
        \\(\frac{2}{n}+(x_2-\frac{2}{n})(t-1),y_1 + (y_2 - y_1)(t-1)) &t\in[1,2]    
    \end{cases}
    \end{align}

Now, we can calculate
\begin{align}
  \label{d2sl2above} d_{h_{n}}(p,q) & \le L_{h_{n}}(\rho_1)
   \\&= \frac{2}{n} + \sqrt{\left|x_2- \frac{2}{n}\right|^2 + \left| y_2 - y_1\right|^2}
   \\&\le  L_{s_{\infty}}(\ell_{pq}) + \frac{2}{n} + \left|\sqrt{\left|x_2 - \frac{2}{n}\right|^2 + |y_2 - y_1|^2} - \sqrt{|x_2|^2 + |y_2 - y_1|^2}\right|
   \\&= L_{s_{\infty}}(\ell_{pq}) + A_n,
\end{align}
where $A_n = \frac{2}{n} + \left|\sqrt{\left|x_2 - \frac{2}{n}\right|^2 + |y_2 - y_1|^2} - \sqrt{|x_2|^2 + |y_2 - y_1|^2}\right|$.

Now, if we define the curve $\rho_2:[0,3]\rightarrow [0,1]^2$ by
\begin{align}
    \rho_2(t)=
    \begin{cases}
        (\frac{2}{n}t,y_1)&t\in[0,1]
        \\(\frac{2}{n}, y_1 +(y_2-y_1)(t-1) &t\in[1,2]
        \\(\frac{2}{n}+(x_2 -\frac{2}{n})(t-2), y_2) &t\in[2,3]
    \end{cases},
\end{align}
then we can calculate
\begin{align}
  \label{d2sb2above}  d_{h_{n}}(p,q) &\le L_{h_n}(\rho_2)
    \\&= \frac{2}{n} + \frac{1}{n}|y_2 - y_1| + x_2-\frac{2}{n}
    \\&=\frac{1}{n}|y_2 - y_1| + x_2
    \\&= L_{s_{\infty}}(\beta) + \frac{1}{n}|y_2 - y_1|
    \le L_{s_{\infty}}(\beta) + \frac{1}{n},
\end{align}
where $\beta$ is defined in Example \ref{ex-Quotient Metric Space}.

By combining \eqref{d2sl2above} and \eqref{d2sb2above}, we find
\begin{align}
     d_{h_{n}}(p,q) &\le \min\{L_{s_{\infty}}(\ell_{pq}) + A_n, L_{s_{\infty}}(\beta) + \frac{1}{n}\}
     \\& \le \min\{L_{s_{\infty}}(\ell_{pq})  + \max\{A_n, \frac{1}{n}\}, L_{s_{\infty}}(\beta) + \max\{A_n, \frac{1}{n}\}\}
     \\& \le \min\{L_{s_{\infty}}(\ell_{pq}), L_{s_{\infty}}(\beta)\} + \max\{A_n, \frac{1}{n}\}
     \\& = \hat{d}_{s_{\infty}}(p,q) + \max\left\{A_n, \frac{1}{n}\right\},
\end{align}
which completes this case.

\textbf{Case 3: $x_1 = x_2 = 0$}

If we define the curve $\rho_3:[0,3]\rightarrow [0,1]^2$ by
\begin{align}
    \rho_3(t)=
    \begin{cases}
        (\frac{1}{n}t,y_1)&t\in[0,1]
        \\(\frac{1}{n}, y_1 +(y_2-y_1)(t-1) &t\in[1,2]
        \\(\frac{1}{n}-\frac{1}{n}(t-2), y_2) &t\in[2,3]
    \end{cases},
\end{align}
then we can calculate
\begin{align}
    0=d_{s_{\infty}}(p,q) &\le d_{h_n}(p,q) 
    \\&\le L_{h_n}(\rho_3) 
    \\&=\frac{1}{n}+ \frac{1}{n}|y_2-y_1|+\frac{1}{n}
    = \frac{2}{n}+\frac{1}{n}|y_2-y_1|+d_{s_{\infty}}(p,q),
\end{align}
which completes this case.

Now we want to show that $d_{s_{\infty}}(p,q)-C_n \le d_{h_n}(p,q) $ where $C_n \rightarrow 0$ and $n \rightarrow \infty$. We will also consider cases for this estimate where Case 3 above already applies here.

\textbf{Case 1:} $x_1,x_2 >0$

Let $\gamma:[0,1]\rightarrow [0,1]^2$, $\gamma_n(t)=(x(t),y(t))$ be a curve connecting $p$ to $q$.

If $\gamma_n([0,1]) \cap P = \emptyset$ then there exists $N \in \N$ so that for $n \ge N$ we find that $\gamma_n(t) \ge \frac{2}{n}$ for all $t \in [0,1]$ and hence
\begin{align}\label{eq: First Time for inf h_n}
    L_{h_n}(\gamma) = L_{s_{\infty}}(\gamma) \ge d_{s_{\infty}}(p,q).
\end{align}

If $\gamma_n([0,1]) \cap P \not = \emptyset$ then we can decompose $[0,1]$ into three subintervals $[0,a_n]$, $(a_n,b_n)$, $[b_n,1]$ where $a_n$ is the first time where $\gamma_n(a_n)=(2/n,y)$ for some $y \in (0,1)$ and $b_n$ is the last time $\gamma_n(b_n)=(2/n,y)$ for some $y \in [0,1]$. Now we can estimate the length
\begin{align}\label{eq-5678}
\begin{split}
    L_{h_n}(\gamma)&= L_{h_n}(\gamma|_{[0,a_n]})+ L_{h_n}(\gamma|_{(a_n,b_n)})+ L_{h_n}(\gamma|_{[b_n,1]})  
    \\&\ge L(\gamma|_{[0,a_n]})+ L(\gamma|_{[b_n,1]}).       
\end{split}
\end{align}

 Since we know that the Euclidean length of a curve connecting $p$ to $\gamma_n(a_n)$ is longer than the straight line connecting those points, say $\ell_{p\gamma_n(a_n)}$, and the same thing for $\gamma_n(b_n)$ to $q$ via $\ell_{\gamma_n(b_n)q}$ we find
\begin{align}\label{eq-9101112}
L(\gamma|_{[0,a_n]})+ L(\gamma|_{[b_n,1]}) \ge L(\ell_{p\gamma_n(a_n)})+ L(\ell_{\gamma_n(a_n)q}).
\end{align}
Since a horizontal line is the shortest way to connect $p$ and $q$ to $\{(2/n,y):y \in (0,1)\}$ we can combine \eqref{eq-5678} and \eqref{eq-9101112} to further estimate
\begin{align}\label{eq: Second Time for inf h_n }
    L_{h_n}(\gamma)&\ge L(\gamma|_{[0,a_n]})+ L(\gamma|_{[b_n,1]}) 
    \\&\ge |x_1-2/n|+|x_2-2/n|
    \\&\ge |x_1|+|x_2|-4/n 
    \\&= L_{s_{\infty}}(\beta)-4/n\ge d_{s_{\infty}}(p,q)-4/n,\label{eq: Second Time for inf h_n End }
\end{align}
where $\beta$ is defined in Example \ref{ex-Quotient Metric Space}.

Now by taking the infimum over all curves $\gamma$ connecting $p$ to $q$ and using \eqref{eq: First Time for inf h_n} and \eqref{eq: Second Time for inf h_n }-\eqref{eq: Second Time for inf h_n End } we find
\begin{align}
    d_{h_n}(p,q) \ge d_{s_{\infty}}(p,q)-4/n,
\end{align}
which completes this case.

\textbf{Case 2:} $x_1=0$ and $x_2>0$

Let $\gamma:[0,1]\rightarrow [0,1]^2$, $\gamma_n(t)=(x(t),y(t))$ be a curve connecting $p$ to $q$ and decompose $[0,1]$ into two subintervals $[0,a_n)$ and $[a_n,1]$ where $a_n \in [0,1]$ is the last time where $\gamma_n(a_n)=(2/n,y)$ for some $y \in [0,1]$. Then we calculate
\begin{align}\label{eq-First h_n Observation}
    L_{h_n}(\gamma)&= L_{h_n}(\gamma|_{[0,a_n)})+L_{h_n}(\gamma|_{[a_n,1]}) \ge L(\gamma|_{[a_n,1]}).
\end{align}
 Since we know that the Euclidean length of any curve connecting two endpoints is larger than the straight line connecting the same endpoints we know that 
\begin{align}\label{eq-Second h_n Observation}
  L(\gamma|_{[a_n,1]}) \ge L(\ell_{\gamma_n(a_n)q})
\end{align}
where $\ell_{\gamma_n(a_n)q}$ is the straight line connecting $\gamma_n(a_n)$ to $q$. Hence, by combining \eqref{eq-First h_n Observation} with \eqref{eq-Second h_n Observation} we find
\begin{align}
    L_{h_n}(\gamma)&\ge L(\ell_{\gamma_n(a_n)q}).
\end{align}
Since a horizontal line is the shortest way to connect $q$ to $\{(2/n,y):y \in (0,1)\}$, we further estimate
\begin{align}
    L_{h_n}(\gamma)& \ge L(\ell_{\gamma_n(a_n)q}) \label{eq:h2ds2below}
    \\&\ge |x_2-2/n|\label{eq-Needs More Justification}
    \\&\ge |x_2|-\frac{2}{n} 
   \\&= L_{s_{\infty}}(\beta)-\frac{2}{n} 
    \ge d_{s_{\infty}}(p,q)-\frac{2}{n},\label{eq:h2ds2belowEnd}
\end{align}
where $\beta$ is defined in Example \ref{ex-Quotient Metric Space}.
Now by taking the infimum over all $\gamma$ connecting $p$ to $q$ in \eqref{eq:h2ds2below}-\eqref{eq:h2ds2belowEnd}, we find
\begin{align}
    d_{h_n}(p,q) \ge d_{s_{\infty}}(p,q)-\frac{2}{n},
\end{align}
which completes this case

Putting everything together we obtain the estimate
\begin{align}
    d_{s_{\infty}}(p,q)-\frac{1}{n} \le d_{h_n}(p,q) \le d_{s_{\infty}}(p,q)+C_n,
\end{align}
where $C_n \rightarrow 0$ as $n \rightarrow \infty$. Now by Theorem \ref{thm-Squeeze}, we see the desired convergence of $d_{h_n}$ to $d_{s_{\infty}}$ which can be made into the metric $\hat{d}_{s_{\infty}}$ by identifying $P$ to be one point.
\end{proof}
\begin{rmrk}\label{rmrk-z_n Converge Quotient Metric}
    One could also consider the sequence of functions defined by
    \begin{align}
    z_n(x)=
    \begin{cases}
        \frac{1}{n}& x=0
        \\ 1 & 0< x \le 1
    \end{cases},
\end{align}
 and show that $d_{z_n}$ converges uniformly to $\hat{d}_{s_{\infty}}$. This shows that if even just one point $x \in [0,1]$ is such that $f_n(x)$ does not satisfy a uniformly positive lower bound then one should not expect the sequence of distance functions to converge to a positive definite distance function. Again, this is why we need to assume a uniform lower bound in all of the main theorems.
\end{rmrk}

\subsection{Blow-Up Examples}

In this subsection we will see that there is much more flexibility in the conditions on a sequence of warping functions required to control a sequence of warped product length spaces from above. In particular, we see that any blow up rate will be allowed as long as the blow up occurs on an increasingly small interval. 

\begin{ex}\label{ex-k_n uniform convergence}
    For the sequence of functions defined by \begin{align}
    k_n(x)=
    \begin{cases}
        n^{\alpha}& 0\le x < \frac{1}{n}
        \\ 1 & \frac{1}{n}\le x \le 1
    \end{cases},
\end{align}
where $\alpha >0$ we find that $d_{k_n}$ converges uniformly to $d$ given in \eqref{def-Generalized Euclidean Distance}.
\end{ex}
\begin{proof}
    First we notice the $k_n(x) \ge 1$ for all $x\in[0,1]$. So by Theorem \ref{thm-Distance Lower Bound Estimate} we know
    \begin{align} 
       d(p,q) \le d_{k_n}(p, q), \forall p,q \in [0,1]^2.
    \end{align}
    Assume $p=(x_1,y_1)$ and $q=(x_2,y_2)$.
    
\textbf{Case 1:} $x_1,x_2 >0$

Hence, we can choose $N \in \N$ large enough so that for all $n \ge N$, we find $\frac{1}{n} <\min\{x_1,x_2\}$. So if $\ell_{pq}$ is the line connecting $p$ to $q$ we find
\begin{align}
    L_{k_n}(\ell_{pq})=L(\ell_{pq})=d(p,q).
\end{align}

\textbf{Case 2:} $x_1=0$ and $x_2>0$

Consider the path $\gamma (t)$ connecting points $p=(0,y_1)$ and $q=(x_2, y_2)$
    \begin{align}
        \gamma (t)=
    \begin{cases}
        (\frac{t}{n}, y_1) & t \in [0,1]
        \\(\frac{1}{n} +(x_2 - \frac{1}{n})(t-1), y_1 + (y_2 - y_1)(t-1)) &t \in [1, 2] 
    \end{cases}.
    \end{align}
Consider the approximation
    \begin{align}
        L_{k_n} (\gamma) &= \int_0^1 \frac{1}{n}dt + \int_1^2 \sqrt{\left(x_2 - \frac{1}{n}\right)^2 + (y_2 - y_1)^2)} dt
        \\&=\frac{1}{n} + \sqrt{\left(x_2 - \frac{1}{n}\right)^2 + {(y_2 - y_1)^2}}
    \end{align}
Notice that $L_{k_n}(\gamma) \rightarrow d(p,q)$ for $d(p,q)=\sqrt{(x_2 - 0)^2 + (y_2 - y_1)^2}$.
Now we can calculate
      \begin{align}
       d(p,q)&\le d_{k_n}(p,q)
       \\& \le L_{k_n}(\gamma)
       \\&=\frac{1}{n} + \sqrt{\left(x_2 - \frac{1}{n}\right)^2 + (y_2 - y_1)^2}
       \\& =d(p,q)+\frac{1}{n} + \sqrt{\left(x_2 - \frac{1}{n}\right)^2 + (y_2 - y_1)^2} - \sqrt{(x_2 - 0)^2 + (y_2 - y_1)^2)}
       \\& \le d(p,q)+\frac{1}{n} + \left|\sqrt{\left(x_2 - \frac{1}{n}\right)^2 + (y_2 - y_1)^2} - \sqrt{(x_2 - 0)^2 + (y_2 - y_1)^2)}\right|
       \\& \le d(p,q)+C_n,
    \end{align}
    where $C_n \rightarrow 0$ as $n \rightarrow \infty$.

\textbf{Case 3:} $x_1=x_2=0$\\
Consider the path $\gamma (t)$ connecting points $p=(0,y_1)$ and $q=(0, y_2)$
    \begin{align}
        \gamma (t)=
    \begin{cases}
        (\frac{t}{n}, y_1) & t \in [0,1]
        \\  (\frac{1}{n},y_1+(y_2-y_1)(t-1)),&t \in [1, 2] 
        \\(\frac{1}{n}(3-t),y_2) &t \in [2, 3] 
    \end{cases}.
    \end{align}
Now by calculating the length of $\gamma$, we find
\begin{align}
   d(p,q) &\le d_{k_n}(p,q)
   \\&\le L_{k_n}(\gamma)
    \\&=\frac{1}{n}+(y_2-y_1)+\frac{1}{n}
    \\&=d(p,q)+\frac{2}{n}.
\end{align}
By combining all three cases, we see
\begin{align}
    d(p,q) \le d_{k_n}(p,q) \le d(p,q) + \max\{C_n,\frac{2}{n}\},
\end{align}
and hence, we can apply Theorem \ref{thm-Squeeze} to obtain the result.
\end{proof}

Now we will see that the previous example will not satisfy a Lipschitz bound. A similar example was studied by B. Allen and E. Bryden in Example 3.1 of \cite{Allen-Bryden-First} where it was shown that no uniform Lipschitz bound held, but a uniform H\"{o}lder bound did hold. Here, we emphasize that our proof works for any blow-up rate quantified by $\alpha>0$.

\begin{thm}\label{thm-k_n no Lipschitz Bound}
    For the sequence of functions defined by \begin{align}
    k_n(x)=
    \begin{cases}
        n^{\alpha}& 0\le x < \frac{1}{n}
        \\ 1 & \frac{1}{n}\le x \le 1
    \end{cases},
\end{align}
where $\alpha >0$ then show that there exists $p_n,q_n \in [0,1]^2$ and a sequence $C_n \rightarrow \infty$ so that
\begin{align}
     d_{k_n}(p_n,q_n) \ge C_n d(p_n,q_n).
\end{align}
\end{thm}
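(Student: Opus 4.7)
The plan is to exhibit a specific sequence of point pairs $(p_n, q_n)$ that are close together but whose $d_{k_n}$-distance is forced to be much larger than their Euclidean distance because any connecting curve must either pay the full blow-up cost to cross directly or detour around the blow-up region. Specifically, I would take $p_n = (0,0)$ and $q_n = (0, 1/n^2)$, so that $d(p_n, q_n) = 1/n^2$. The two endpoints lie on the vertical line $x = 0$, deep inside the blow-up region $\{x < 1/n\}$, and the key point is that any curve joining them must either stay in this region (paying the inflated warping $n^\alpha$ for all vertical motion) or leave it and return (paying a horizontal excursion of length at least $2/n$, which is much greater than $1/n^2$).

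To make this rigorous, let $\gamma:[0,1]\to[0,1]^2$, $\gamma(t)=(x(t),y(t))$, be any piecewise smooth curve from $p_n$ to $q_n$ and split into two cases. In \textbf{Case 1}, if $x(t) < 1/n$ for every $t$, then $k_n(x(t)) = n^\alpha$ throughout, so
\begin{align}
L_{k_n}(\gamma) \ge \int_0^1 n^\alpha |y'(t)|\,dt \ge n^\alpha \left|\int_0^1 y'(t)\,dt\right| = n^{\alpha - 2}.
\end{align}
In \textbf{Case 2}, there exists $t^* \in [0,1]$ with $x(t^*) \ge 1/n$. Since $x(0) = x(1) = 0$, a triangle inequality estimate gives $\int_0^1 |x'(t)|\,dt \ge 2/n$; also $\int_0^1 |y'(t)|\,dt \ge 1/n^2$. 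Using $k_n \ge 1$ and the elementary inequality $\sqrt{a^2+b^2} \ge (|a|+|b|)/\sqrt{2}$,
\begin{align}
L_{k_n}(\gamma) \ge \int_0^1 \sqrt{x'(t)^2 + y'(t)^2}\,dt \ge \frac{1}{\sqrt{2}}\int_0^1 (|x'(t)|+|y'(t)|)\,dt \ge \frac{1}{\sqrt{2}}\left(\frac{2}{n}+\frac{1}{n^2}\right).
\end{align}

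Taking the infimum over $\gamma$ and combining both cases yields
\begin{align}
d_{k_n}(p_n,q_n) \ge \min\!\left(n^{\alpha-2},\ \frac{1}{\sqrt{2}}\left(\frac{2}{n}+\frac{1}{n^2}\right)\right),
\end{align}
so dividing by $d(p_n,q_n) = 1/n^2$ gives
\begin{align}
\frac{d_{k_n}(p_n,q_n)}{d(p_n,q_n)} \ge \min\!\left(n^{\alpha},\ \frac{1}{\sqrt{2}}(2n+1)\right) =: C_n,
\end{align}
and $C_n \to \infty$ for every $\alpha > 0$, which is exactly the claim. The main obstacle I expect is in Case 2, where I need to combine the horizontal excursion estimate with the vertical displacement estimate; the $(|a|+|b|)/\sqrt{2}$ inequality is the clean tool that lets me add these contributions inside a single integral bound, but one must be a little careful because $\sqrt{a^2+b^2}$ does not dominate $|a|+|b|$, only a constant multiple of it.
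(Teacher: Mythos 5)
Your proof is correct and takes a genuinely different, somewhat leaner route than the paper's. Both arguments use the same two-case dichotomy (either the connecting curve stays in the blow-up strip $\{x<1/n\}$ and pays $n^\alpha$ per unit of vertical motion, or it exits and pays a horizontal detour of at least $2/n$), but the paper engineers the vertical separation to the exact value $|y_{2_n}-y_{1_n}|=\tfrac{2}{n^{\alpha+1}(n^\alpha-1)}$ so that the straight line $\ell_{p_nq_n}$ is provably an exact minimizer and $d_{k_n}(p_n,q_n)=n^\alpha d(p_n,q_n)$. You instead fix a simple separation $1/n^2$ and are content with a lower bound $d_{k_n}(p_n,q_n)\ge\min\bigl(n^{\alpha-2},\tfrac{1}{\sqrt2}(\tfrac2n+\tfrac1{n^2})\bigr)$ without identifying the geodesic at all; dividing by $1/n^2$ gives $C_n=\min\bigl(n^\alpha,\tfrac{1}{\sqrt2}(2n+1)\bigr)\to\infty$. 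Your version avoids the delicate calibration (and sidesteps the paper's small slip that $\tfrac{2}{n}\ge\tfrac{2}{n(n^\alpha-1)}$ actually requires $n^\alpha\ge2$, not merely $n\ge2$, when $0<\alpha<1$), at the mild cost of a non-explicit $C_n$ and an unnecessary $1/\sqrt2$ loss. Indeed, in your Case 2 you could discard the vertical term and the $\sqrt{a^2+b^2}\ge(|a|+|b|)/\sqrt2$ trick entirely, using only $L_{k_n}(\gamma)\ge\int_0^1|x'(t)|\,dt\ge 2/n$, which gives the cleaner $C_n=\min(n^\alpha,2n)$. One small nit: be explicit that in Case 1 the closure issue at $x=1/n$ is harmless because you have stipulated strict inequality $x(t)<1/n$ for all $t$, so $k_n(x(t))=n^\alpha$ identically; and in Case 2 the existence of $t^*$ with $x(t^*)\ge1/n$ together with $x(0)=x(1)=0$ is exactly what delivers $\int_0^1|x'|\ge2/n$ by splitting at $t^*$.
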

\begin{proof}
First, we will prove the following claim:
Let $p_n=(0,y_{1_n})$, $q_n=(0,y_{2_n})$, and $|y_{2_n}-y_{1_n}| = \frac{2}{n^{\alpha+1}(n^{\alpha}-1)}$, then $d_{k_n}(p_n,q_n)\ge L_{k_n}(\ell_{p_n q_n})$.

Notice that for $n \ge 2$
\begin{align}
    \frac{2}{n} \ge \frac{2}{n(n^{\alpha}-1)}  =\frac{2n^{\alpha}}{n^{\alpha+1}(n^{\alpha}-1)}=L_{k_n}(\ell_{pq}). \label{lpq2n}
\end{align}
Let $\gamma_n:[0,1]\rightarrow [0,1]^2$, $\gamma_n(t)=(x_n(t),y_n(t))$ be a curve connecting $p_n$ to $q_n$. Notice that if $\gamma_n(t) \cap \{
(\frac{1}{n}, y) : y \in [0,1]\} \neq \emptyset$ and we let $a,b \in [0,1]$, $a < b$ so that $a$ is the first time $\gamma_n$ enters the $\{
(\frac{1}{n}, y) : y \in [0,1]\}$ region and $b$ is the last time. Then we can calculate 
\begin{align}
    L_{k_n}(\gamma_n(t)) &= \int_0^a \sqrt{x'_n(t)^2+k_n(t)^2y'_n(t)^2} dt +\int_a^b \sqrt{x'_n(t)^2+y'_n(t)^2} dt
    \\&\quad +\int_b^1\sqrt{x'_n(t)^2+k_n(t)^2y'_n(t)^2} dt
    \\&\ge \int_0^a |x'_n(t)| dt +\int_a^b |y'_n(t)| dt + \int_b^1 |x'_n(t)| dt
    \\&\ge \left|\int_0^a x'_n(t) dt\right| + \left|\int_a^b y'_n(t) dt\right| + \left|\int_b^1 x'_n(t)dt\right|
    \\&=|0-\frac{1}{n}|+ |y_{2_n} - y_{1_n}| +|\frac{1}{n}-0|
    \\&\geq \frac{2}{n} + |y_{2_n} - y_{1_n}| \geq \frac{2}{n}. \label{Lkngamma}
\end{align}

Now, by combining \eqref{lpq2n} and \eqref{Lkngamma}, we have for $\gamma_n(t) \cap \{
(\frac{1}{n}, y) : y \in [0,1]\} \neq \emptyset$ that
\begin{align}
    L_{k_n}(\gamma_n(t)) \geq L_{k_n}(\ell_{p_nq_n}).
\end{align}

Then for $\gamma_n(t) \cap \{
 (\frac{1}{n}, y) : y \in [0,1]\} = \emptyset$ we find
\begin{align}
    L_{k_n}(\gamma_n(t))&=\int_0^1 \sqrt{x'_n(t)^2+k_n(t)^2y'_n(t)^2} dt 
    \\&\geq \int_0^1 |k_n(x)y'_n(t)| dt
    \\&= n^{\alpha}\left| \int_0^1  y'_n(t)dt \right|
    \\&=n^{\alpha}|y_{1_n}-y_{2_n}|
    \\&=\frac{2n^\alpha}{n^{\alpha+1}(n^{\alpha}-1)}=L_{k_n}(\ell_{p_nq_n}),
\end{align}
so $\ell_{p_nq_n}$ will always be the shortest path for this sequence of functions. Hence, by taking the infimum over all curves connecting $p_n$ to $q_n$, we have proved the claim.

Now consider the sequence of points $p_n=(0, y_{1_n})$ and $q_n=(0,y_{2_n})$ such that $|y_{2_n}-y_{1_n}| =\frac{2}{n^{\alpha+1}(n^{\alpha}-1)}$ and let $C_n = n^\alpha$. Then we can calculate
    \begin{align}
    d_{k_n}(p_n,q_n)&\ge L_{k_n}(\ell_{p_{n}q_{n}})
    =n^\alpha|y_{2_n}-y_{1_n}|
    =n^\alpha d(p_n,q_n)
     = C_n d(p_n,q_n).
    \end{align}
This completes the proof showing there exists a sequence of points $p_n,q_n \in [0,1]^2$ and a sequence $C_n \rightarrow \infty$ so that $d_{k_n}(p_n,q_n) \geq C_nd(p_n,q_n)$.
\end{proof}

Now, we see that if the blow up occurs at only one point, then we can obtain a Lipschitz bound. Theorem \ref{thm-k_n no Lipschitz Bound} and Example \ref{ex-w_n uniform convergence} explain why we need the hypothesis of dense bounds for the entire sequence in order to obtain the conclusion of Theorem \ref{thm-Main Thm 1}.

\begin{ex}\label{ex-w_n uniform convergence}
    For the sequence of functions defined by
    \begin{align}
    w_n(x)=
    \begin{cases}
        n^{\alpha}& x=0
        \\ 1 & 0< x \le 1
    \end{cases}
\end{align}
where $\alpha >0$, we find that $d_{w_n}$ converges uniformly to $d$, the Euclidean distance function, and 
\begin{align}
    d_{w_n}(p,q) \le d(p,q).
\end{align}
\end{ex}
\begin{proof}
Notice that $1 \le w_n \le k_n$ and hence by Theorem \ref{thm-Distance Lower Bound Estimate} we know
\begin{align}
    d(p,q) \le d_{w_n}(p,q) \le d_{k_n}(p,q), \quad \forall p,q \in [0,1]^2,
\end{align}
where $k_n$ is defined in Example \ref{ex-k_n uniform convergence} where we also proved that $d_{k_n} \rightarrow d$ uniformly. Hence
\begin{align}
    d(p,q) \le d_{w_n}(p,q) \le d(p,q)+C_n, \quad \forall p,q \in [0,1]^2,
\end{align}
where $C_n \rightarrow 0$. So by Theorem \ref{thm-Squeeze}, we see that $d_{w_n}\rightarrow d$ uniformly.

Now we want to show that $d_{w_n}$ satisfies a Lipschitz bound.

\textbf{Case 1: $x_1=x_2=0$}

Pick a $x_\varepsilon \in (0,1]$ $\forall\varepsilon>0$ such that $|x_\varepsilon-x_1|<\varepsilon$. Now, define $p_\varepsilon =(x_\varepsilon, y_1)$ and $q_\varepsilon = (x_\varepsilon, y_2)$. We can calculate
    \begin{align}
        d_{w_n}(p,q)&\leq d_{w_n}(p,p_\varepsilon) + d_{w_n}(p_\varepsilon, q_\varepsilon) +d_{w_n}(q_\varepsilon,q)
        \\& \le L_{w_n}(\ell_{pp_{\varepsilon}})+L_{w_n}(\ell_{p_{\varepsilon}q_{\varepsilon}})+L_{w_n}(\ell_{q_{\varepsilon}q})
        \\&= \varepsilon + |y_1-y_2| + \varepsilon
        \\&=2\varepsilon +d(p,q).
    \end{align}
Since this is true $\forall \varepsilon >0$, we find that $d_{w_n}(p,q)\leq d(p,q)$.

\textbf{Case 2: $x_1\in (0,1]$, and $x_2\in [0,1]$} 

Notice
    \begin{align}
        d_{w_n}(p,q) \le L_{w_n}(\ell_{pq})= L(\ell_{pq}) = d(p,q).
    \end{align}
Hence the result follows.
\end{proof}

Now we see that if the blow up occurs on an interval which is not vanishing along the sequence then the sequence of distance functions will converge to a metric which does not have the topology of a warped product length space defined with respect to a non-negative and piecewise continuous warping function. We note that there is nothing special about the choice of $[0,\frac{1}{2})$ in this example and if one built an example which blew up on any fixed interval a similar behaviour would appear. Thus demonstrating the importance of at least having a dense countable family of subsets where the sequence of warping functions is bounded in Theorem \ref{thm-Main Thm 2}.

\begin{ex}\label{ex-v_n Converges to a Metic with Different Topology}
    For the sequence of functions defined by \begin{align}
    v_n(x)=
    \begin{cases}
        n^{\alpha}& 0\le x < \frac{1}{2}
        \\ 1 & \frac{1}{2}\le x \le 1
    \end{cases},
\end{align}
where $\alpha >0$, we find that $d_{v_n}$ converges uniformly to the metric space $d_{v_{\infty}}$, described carefully in Example \ref{ex-Blow Up Example Metric Description}, which does not have the same topology as the sequence.
\end{ex}
\begin{proof}
We start by establishing an upper bound $d_{v_n} \le d_{v_{\infty}}$, by considering a few cases. Let $p=(x_1,y_1)$ and $q=(x_2,y_2)$.

\textbf{Case 1:} $x_1,x_2 \ge \frac{1}{2}$.

We calculate
\begin{align}
    d_{v_n}(p,q) \le L_{v_n}(\ell_{pq}) =L(\ell_{pq}) = d(p,q)=d_{v_{\infty}}(p,q),
\end{align}
in this case.

\textbf{Case 2:} $x_1 < \frac{1}{2}$ and $x_2\ge \frac{1}{2}$ or $x_1 \ge \frac{1}{2}$ and $x_2< \frac{1}{2}$.

Without loss of generality, let $x_1 < \frac{1}{2}$ and $x_2 \geq \frac{1}{2}$, and 
\begin{align}
    \gamma(t) =
    \begin{cases}
        ((\frac{1}{2}-x_1)t + x_1, y_1) \quad &t \in [0,1]
        \\((x_2-\frac{1}{2})(t-1) + \frac{1}{2}, (y_2-y_1)(t-1) + y_1) \quad &t \in [1,2]
    \end{cases}.
\end{align}
Then, we can calculate
\begin{align}
    d_{v_n}(p,q) &\le L_{v_n}(\gamma(t)) 
    \\&= \left|x_1-\frac{1}{2}\right| + \sqrt{\left|x_2 - \frac{1}{2}\right|^2 + |y_2 - y_1|^2} 
    \\&= \left|x_1 - \frac{1}{2}\right| + d\left(\left(\frac{1}{2},y_1\right),q\right) =d_{v_{\infty}}(p,q),
\end{align}
in this case.

\textbf{Case 3:} $x_1 < \frac{1}{2}$, $x_2< \frac{1}{2}$, and $y_1\not = y_2$.

Let 
\begin{align}
    \gamma(t)= 
    \begin{cases}
        ((\frac{1}{2}-x_1)t + x_1, y_1) \quad &t \in [0,1]
        \\(\frac{1}{2}, (y_2-y_1)(t-1) + y_1) \quad &t \in [1,2]
        \\((x_2-\frac{1}{2})(t-2) + \frac{1}{2}, y_2) \quad &t \in [2,3]
    \end{cases}.
\end{align}
Then, we can calculate
\begin{align}
    d_{v_n}(p,q) \le L_{v_n}(\gamma(t)) = \left|x_1-\frac{1}{2}\right| + |y_2 - y_1| + \left|x_2 - \frac{1}{2}\right| =d_{v_{\infty}}(p,q),
\end{align}
in this case.

\textbf{Case 4:} $x_1 < \frac{1}{2}$, $x_2< \frac{1}{2}$, and $y_1 = y_2$.

We can calculate
\begin{align}
    d_{v_n}(p,q) \le L_{v_n}(\ell_{pq}) =L(\ell_{pq}) = |x_2 - x_1| =d_{v_{\infty}}(p,q),
\end{align}
in this case.

Hence, we have established the desired upper bound on $d_{v_n}$. Now, we establish a lower bound $d_{v_n} \ge d_{v_{\infty}}-C_n$ by again considering the same four cases.

Let $\gamma_n(t)=(x_n(t),y_n(t))$ be any curve connecting $p$ to $q$.

 \textbf{Case 1:} $x_1,x_2 \ge \frac{1}{2}$.

 Since $1 \le v_n$ we know by Theorem \ref{thm-Distance Lower Bound Estimate} that $d_{v_n}(p,q) \ge d(p,q)=d_{v_{\infty}}(p,q)$ in this case. 

\textbf{Case 2:} $x_1 < \frac{1}{2}$ and $x_2\ge \frac{1}{2}$ or $x_1 \ge \frac{1}{2}$ and $x_2< \frac{1}{2}$.

Without loss of generality we can just consider the first situation where $x_1 < \frac{1}{2}$ and $x_2\ge \frac{1}{2}$.

Then if $a \in [0,1]$ is the last time that $x_n(a)=\frac{1}{2}$, then we can calculate
\begin{align}
     L_{v_n}(\gamma_n)&=\int_0^a \sqrt{x'_n(t)^2+v_n(x_n(t))^2y'_n(t)^2}dt+\int_a^1 \sqrt{x'_n(t)^2+y'_n(t)^2}dt
     \\&\geq \int_0^a |x'_n(t)| +\int_a^1 \sqrt{x'_n(t)^2+y'_n(t)^2}dt
     \\&= \left|\int_0^a x'_n(t)\right| +\int_a^1 \sqrt{x'_n(t)^2+y'_n(t)^2}dt
     \\&= \left|x_1 - \frac{1}{2}\right| + d(\gamma_n(a),q).\label{eq-Basically What we want to Show}
\end{align}
Now, we want to show that  $\gamma_n(a)\rightarrow (\frac{1}{2},y_1)$ for a curve which is attempting to minimize the distance, i.e. 
\begin{align}
    d_{v_n}(p,q) \ge L_{v_n}(\gamma_n)-\frac{1}{n}.\label{eq-Almost Minimizing Curve}
\end{align}

Notice that $\gamma_n\cap \{(\frac{1}{2},y):y\in[0,1]\}\not = \emptyset$ and let $a' \in (0,1]$ be the first time $\gamma_n$ intersects $(\frac{1}{2},y)$. Then we find
 \begin{align}
    L_{v_n}(\gamma_n)&= \int_0^{a'}\sqrt{x'_n(t)^2+v_n(x_n(t))^2y'_n(t)^2}dt+\int_{a'}^a\sqrt{x'_n(t)^2+v_n(x_n(t))^2y'_n(t)^2}dt
    \\&\quad+\int_a^1\sqrt{x'_n(t)^2+v_n(x_n(t))^2y'_n(t)^2}dt.
    \end{align}
 Since $v_n$ is constant when $0 \le x <\frac{1}{2}$ and $v_n=1$ when $\frac{1}{2} \le x \le 1$ we find
 \begin{align}
     L_{v_n}(\gamma_n)& \ge L_{v_n}(\ell_{p\gamma_n(a')})+ L_{v_n}(\ell_{\gamma_n(a')\gamma_n(a)})+ L_{v_n}(\ell_{\gamma_n(a)q})
     \\& \ge L_{v_n}(\ell_{p\gamma_n(a')})+ L(\ell_{\gamma_n(a')\gamma_n(a)})+ L(\ell_{\gamma_n(a)q})\label{eq-Point Should Converge}
     \\& \ge L_{v_n}(\ell_{p\gamma_n(a')})+ L(\ell_{\gamma_n(a')q})\label{eq-Point Should Converge 2},
 \end{align}
 where a straight line will be shorter connecting $\gamma_n(a')$ to $q$ in \eqref{eq-Point Should Converge} to \eqref{eq-Point Should Converge 2}.
 Now, notice that $L_{v_n}(\ell_{p\gamma_n(a')}) \rightarrow \infty$ unless $\gamma_n(a')\rightarrow (\frac{1}{2},y_1)$ and by \eqref{eq-Point Should Converge} and \eqref{eq-Point Should Converge 2} we see that $\gamma_n$ would be an even shorter admissible curve if $|\gamma_n(a)-\gamma_n(a')|\rightarrow 0$ and hence $\gamma_n(a) \rightarrow (\frac{1}{2},y_1)$.

 So by \eqref{eq-Basically What we want to Show} and \eqref{eq-Almost Minimizing Curve}, we see that
 \begin{align}
      d_{v_n}(p,q) &\ge\left|x_1 - \frac{1}{2}\right| + d(\gamma_n(a),q) - \frac{1}{n}
      \\&\ge\left|x_1 - \frac{1}{2}\right| + d((\frac{1}{2},y_1),q) - C_n
      \\&\ge d_{v_{\infty}}(p,q) - C_n,
 \end{align}
 where $C_n \rightarrow 0$ as $n \rightarrow \infty$, as desired.

\textbf{Case 3:} $x_1 < \frac{1}{2}$, $x_2< \frac{1}{2}$, and $y_1\not = y_2$.

Notice that if $\gamma \cap (\frac{1}{2},y) = \emptyset$, then we have
    \begin{align}
    L_{v_n}(\gamma)&= \int_0^1\sqrt{x'(t)^2+v_n(x(t))^2y'(t)^2}dt
\ge L_{v_n}(\ell_{pq}),
    \end{align}
 since $v_n$ is constant when $0 \le x <\frac{1}{2}$. Since $v_n(x) = n^{\alpha}$ for $x<\frac{1}{2}$, $L_{v_n}(\ell_{pq}) \rightarrow \infty$ when $y_1\not =y_2$, so we must take a path that intersects $(\frac{1}{2}, y)$ for some $y \in [0,1]$.

Thus, take a path $\gamma$ such that $\gamma \cap (\frac{1}{2},y) \neq \emptyset$. Then if $a \in [0,1]$ is the first time that $x(a)=\frac{1}{2}$ and $b \in [0,1]$ is the last time that $x(b) = \frac{1}{2}$, we can use $v_n(x) = 1$ for $x\geq\frac{1}{2}$ to calculate
\begin{align}
    L_{v_n}(\gamma(t))&=\int_0^a \sqrt{x_n'(t)^2+v_n(x(t))^2y_n'(t)^2}dt +\int_a^b \sqrt{x_n'(t)^2+v_n(x(t))^2y_n'(t)^2}dt  
    \\&\quad + \int_b^1 \sqrt{x_n'(t)^2+v_n(x(t))^2y_n'(t)^2}dt
    \\&\geq \int_0^a |x_n'(t)|dt + \int_a^b |y_n'(t)|dt + \int_b^1 |x_n'(t)|dt
    \\&\geq \left|\int_0^a x_n'(t)dt\right| + \left|\int_a^b y_n'(t)dt\right| + \left|\int_b^1 x_n'(t)dt\right|
    \\&= \left|x_1 - \frac{1}{2}\right| + |y_2 - y_1| + \left|x_2 - \frac{1}{2}\right| = d_{v_\infty}(p,q).
\end{align}

\textbf{Case 4:} $x_1 < \frac{1}{2}$, $x_2< \frac{1}{2}$, and $y_1 = y_2$.
Here we calculate
\begin{align}
    L_{v_n}(\gamma)&= \int_0^1\sqrt{x'(t)^2+v_n(x(t))^2y'(t)^2}dt
    \\&\ge \int_0^1|x'(t)|dt
    \\&\ge \left|\int_0^1x'(t)dt \right|
    \\&=|x_2-x_1|=d_{v_{\infty}}(p,q).
\end{align}
Now we have established the lower bound by considering four cases.

Hence we see by Theorem \ref{thm-Squeeze} that $d_{v_n} \rightarrow d_{v_\infty}$ uniformly. By Example \ref{ex-Blow Up Example Metric Description} we see that $d_{v_{\infty}}$ does not have the same topology as the sequence.
\end{proof}

\section{Proofs of Main Theorems}\label{sec-Main Proofs}

Our first goal is to develop a condition which implies that a sequence of warped product length spaces satisfies a Lipschitz bound when compared to the taxi metric. Since the taxi metric is uniformly equivalent to the Euclidean metric this will imply that the sequence satisfies a Lipschitz bound when compared to Euclidean distance as well.

\begin{thm}\label{thm-Dense Subset Comparison to Taxi}
    Let $C \ge 1$, $f:[t_0,t_1]\rightarrow (0,\infty)$ be a function, and $Q \subset [t_0,t_1]$ a countable dense subset. Then if 
    \begin{align}
        f(q) \le C, \quad \forall q \in Q,
    \end{align}
    then 
    \begin{align}
        d_{f}(p,q) \le C d_{taxi}(p,q),
    \end{align}
    for all $p,q \in M=[t_0,t_1]\times \Sigma$.
\end{thm}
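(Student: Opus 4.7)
The plan is to exploit density of $Q$ to construct an explicit piecewise smooth curve from $p$ to $q$ whose length we can control, and then pass to the infimum. The natural candidate curve is the taxi-like path that first moves horizontally (constant $\Sigma$-coordinate) and then vertically (constant $[t_0,t_1]$-coordinate), because on a horizontal segment the integrand in \eqref{def-Warped Product Length Formula} reduces to $|x'(t)|$ and the factor $f$ does not appear, while on a vertical segment $x(t)$ is constant and the integrand becomes $f(x(t))\sqrt{\sigma(\alpha'(t),\alpha'(t))}$. Thus on a vertical segment I need to make sure $f$ is evaluated at a point of $Q$, so that the hypothesis $f(q)\le C$ applies.

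Concretely, write $p=(x_1,p_\Sigma)$ and $q=(x_2,q_\Sigma)$. Given $\varepsilon>0$, use the density of $Q$ in $[t_0,t_1]$ to pick $q_\varepsilon\in Q$ with $|q_\varepsilon-x_2|<\varepsilon$. Let $\alpha:[0,1]\to\Sigma$ be the minimizing geodesic from $p_\Sigma$ to $q_\Sigma$ (which exists and is smooth since $(\Sigma,\sigma)$ is smooth and compact) and define the piecewise smooth curve $\gamma:[0,3]\to M$ by
\begin{align*}
\gamma(t)=
\begin{cases}
\bigl((q_\varepsilon-x_1)t+x_1,\;p_\Sigma\bigr) & t\in[0,1]\\
\bigl(q_\varepsilon,\;\alpha(t-1)\bigr) & t\in[1,2]\\
\bigl((x_2-q_\varepsilon)(t-2)+q_\varepsilon,\;q_\Sigma\bigr) & t\in[2,3].
\end{cases}
\end{align*}

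On the first and third pieces the $\Sigma$-component is constant, so the length of each piece is just the horizontal displacement, bounded respectively by $|x_1-x_2|+\varepsilon$ and $\varepsilon$. On the middle piece, the $x$-coordinate equals $q_\varepsilon\in Q$, so the integrand is $f(q_\varepsilon)\sqrt{\sigma(\alpha',\alpha')}\le C\sqrt{\sigma(\alpha',\alpha')}$, and integrating over $[1,2]$ gives at most $C\,d_\sigma(p_\Sigma,q_\Sigma)$. Combining,
\begin{align*}
d_f(p,q)\le L_f(\gamma)\le |x_1-x_2|+2\varepsilon+C\,d_\sigma(p_\Sigma,q_\Sigma)\le C\,d_{taxi}(p,q)+2\varepsilon,
\end{align*}
using $C\ge 1$ in the last inequality. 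Letting $\varepsilon\to 0$ gives the desired bound.

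The only thing that is at all subtle is making sure the bound $f(q_\varepsilon)\le C$ applies on the entire middle segment; this is why we force $x(t)$ to be identically equal to the point $q_\varepsilon\in Q$ on $[1,2]$, rather than taking the more obvious choice $x(t)=x_2$ (which would require $f(x_2)\le C$, an assumption we do not have). The other minor point is that we need $\alpha$ piecewise smooth, which is granted by smoothness of $(\Sigma,\sigma)$; no convexity or injectivity radius hypotheses are needed because we only use the length of $\alpha$, not any of its geometric properties. Everything else is a routine computation using the definition \eqref{def-Warped Product Length Formula} and the triangle inequality.
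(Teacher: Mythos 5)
Your proof is correct and follows essentially the same idea as the paper's: construct a staircase path whose vertical segment occurs at an $x$-coordinate lying in $Q$, so that $f\le C$ there, and whose horizontal segments contribute only the $x$-displacement. The only cosmetic difference is that the paper splits into the cases $x(p)=x(q)$ and $x(p)\neq x(q)$ (in the latter picking the pivot $x'\in Q\cap[x(p),x(q)]$ exactly, eliminating the $\varepsilon$), whereas you always take the pivot within $\varepsilon$ of $x_2$ and let $\varepsilon\to 0$, which handles both cases uniformly; either way the substance of the argument is identical.
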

\begin{proof}
We will proceed by considering two cases.

\textbf{Case 1: $x(p) = x(q)$}

Pick a $x_\varepsilon \in Q$ such that $|x_\varepsilon-x(p)|<\varepsilon$ and define $p_{\varepsilon}=(x_{\varepsilon},p_{\Sigma})$, $q_{\varepsilon}=(x_{\varepsilon},q_{\Sigma})$. Now, using the definition of generalized lines for warped products \eqref{def-Generalized Line}, we can calculate
    \begin{align}
        d_f(p,q)&\leq d_f(p,p_\varepsilon) + d_f(p_\varepsilon, q_\varepsilon) +d_f(q_\varepsilon,q)
        \\&\leq L_f(\ell_{pp_\varepsilon})+L_f(\ell_{p_{\varepsilon}q_{\varepsilon}})+L_f(\ell_{q_{\varepsilon}q})
        \\&\leq \varepsilon + Cd_{\sigma}(p_{\Sigma},q_{\Sigma}) + \varepsilon
        \\&=2\varepsilon +Cd_{taxi}(p,q).
    \end{align}
Since this is true for all $\varepsilon$, we find that $d_f(p,q)\leq Cd_{taxi}(p,q)$.

\begin{figure}[h]
 \centering
 \begin{tikzpicture}[scale=1]
  \draw[dashed] (1,2) -- (2,2) [radius=0.025];
  \draw[dashed] (2,2) -- (2,1) [radius=0.025];
  \draw[dashed] (2,1) -- (1,1) [radius=0.025];
   \draw[fill] (1,2) circle [radius=0.025];
   \node[left, outer sep=2pt] at (1,2){$p$};
   \draw[fill] (1,1) circle [radius=0.025];
   \node[left, outer sep=2pt] at (1,1) {$q$};
   \draw[fill] (2,2) circle [radius=0.025];
      \node[right, outer sep=2pt] at (2,2){$p'$};
   \draw[fill] (2,1) circle [radius=0.025];
   \node[right, outer sep=2pt] at (2,1) {$q'$};
 \end{tikzpicture}
\caption{When $x(p) = x(q)$.}
 \label{fig:curve}
\end{figure}

\textbf{Case 2: $x(p) \neq x(q)$}

Without loss of generality, we may assume that $x(p)<x(q)$. Pick a $x'\in Q\cap [x(p),x(q)]$ and let $p' = (x',p_{\Sigma})$ and $q' = (x',q_{\Sigma})$. We can then calculate 
    \begin{align}
        d_f(p,q) &=d_f(p,p') + d_f(p',q') +d_f(q',q)
        \\&\leq L(\ell_{pp'}) + L_f(\ell_{p'q'}) + L(\ell_{q'q})
        \\&= |x(p)-x'|+Cd_{\sigma}(p_{\Sigma},q_{\Sigma})+|x(p)-x(q)|-|x(p)-x'|
        \\&=|x(p)-x(q)|+Cd_{\sigma}(p_{\Sigma},q_{\Sigma})
        \\&\le C|x(p)-x(q)|+Cd_{\sigma}(p_{\Sigma},q_{\Sigma})= Cd_{taxi}(p,q).
    \end{align}

    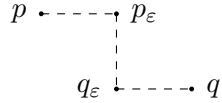
\begin{figure}[h]
 \centering
 \begin{tikzpicture}[scale=1]
  \draw[dashed] (1,2) -- (2,2) [radius=0.025];
  \draw[dashed] (2,2) -- (2,1) [radius=0.025];
  \draw[dashed] (2,1) -- (3,1) [radius=0.025];
   \draw[fill] (1,2) circle [radius=0.025];
   \node[left, outer sep=2pt] at (1,2){$p$};
   \draw[fill] (3,1) circle [radius=0.025];
   \node[right, outer sep=2pt] at (3,1) {$q$};
   \draw[fill] (2,2) circle [radius=0.025];
      \node[right, outer sep=2pt] at (2,2){$p_{\varepsilon}$};
   \draw[fill] (2,1) circle [radius=0.025];
   \node[left, outer sep=2pt] at (2,1) {$q_{\varepsilon}$};
 \end{tikzpicture}
 \caption{When $x(p) \neq x(q)$.}
 \label{fig:curve}
\end{figure}
    
Thus, we reach our conclusion $\forall p,q \in M$, we have $d_f(p,q)\leq Cd_{taxi}(p,q)$.
\end{proof}

We now apply Theorem \ref{thm-Dense Subset Comparison to Taxi} to a sequence to deduce Lipschitz bounds for the sequence.

\begin{cor}\label{cor-Dense Subset Comparison to Taxi} 
     Let $C \ge 1$, $f_n:[t_0,t_1]\rightarrow (0,\infty)$ be a sequence of functions, and $Q_n \subset [t_0,t_1]$ a sequence of countable dense subsets. Then if 
    \begin{align}
        f_n(q) \le C, \quad \forall q \in Q_n,
    \end{align}
    then 
    \begin{align}
        d_{f_n}(p,q) \le C d_{taxi}(p,q),
    \end{align}
    for all $p,q \in M$ and hence $d_{f_n}$ is equicontinuous.
\end{cor}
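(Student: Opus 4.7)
The plan is to deduce the corollary from Theorem \ref{thm-Dense Subset Comparison to Taxi} applied index-by-index, then convert the resulting uniform taxi-bound into an honest Lipschitz bound against the Euclidean metric $d$, and finally invoke Theorem \ref{thm-Lipschitz Implies Equicontinuity}. The hypotheses of Theorem \ref{thm-Dense Subset Comparison to Taxi} are satisfied separately for every $n$: for each fixed $n$, the set $Q_n$ is a countable dense subset of $[t_0,t_1]$ and $f_n(q) \le C$ for all $q \in Q_n$. So the theorem applies to each $f_n$ individually, yielding
\begin{align}
    d_{f_n}(p,q) \le C\, d_{taxi}(p,q), \qquad \forall p,q \in M, \quad \forall n \in \N,
\end{align}
which is the first stated conclusion.

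Next, I would pass from the taxi metric to the Euclidean metric using \eqref{eq-Taxi to Euclidean}, namely $d_{taxi}(p,q) \le \sqrt{2}\, d(p,q)$. Combining this with the previous display produces the uniform Lipschitz bound
\begin{align}
    d_{f_n}(p,q) \le \sqrt{2}\, C\, d(p,q), \qquad \forall p,q \in M, \quad \forall n \in \N.
\end{align}
The constant $\sqrt{2}C$ is independent of $n$, so this is genuinely uniform in the index.

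Finally, with this uniform Lipschitz bound in hand, Theorem \ref{thm-Lipschitz Implies Equicontinuity} applies directly with constant $\sqrt{2}C$, yielding equicontinuity of the sequence $d_{f_n}$. There is no real obstacle here: each step is a direct citation of a previously established result, and the only content is to chain them in the correct order and track that the constants do not depend on $n$. The mild point worth flagging is that Theorem \ref{thm-Dense Subset Comparison to Taxi} was stated for a single function with a single countable dense subset, so one should note explicitly that, for each $n$, the pair $(f_n, Q_n)$ satisfies those single-function hypotheses, which is immediate from the corollary's assumptions.
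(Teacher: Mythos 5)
Your proposal is correct and follows exactly the same route as the paper: apply Theorem \ref{thm-Dense Subset Comparison to Taxi} to each pair $(f_n,Q_n)$, pass to a Euclidean Lipschitz bound via \eqref{eq-Taxi to Euclidean}, and conclude equicontinuity from Theorem \ref{thm-Lipschitz Implies Equicontinuity}. The constant tracking and the remark that the theorem's single-function hypotheses hold for each index are both accurate.
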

\begin{proof}
    Apply Theorem \ref{thm-Dense Subset Comparison to Taxi} for each $f_n$ and $Q_n$ to obtain the desired Lipschitz upper bound. The equicontinuity conclusion follows from Theorem \ref{thm-Lipschitz Implies Equicontinuity} combined with \eqref{eq-Taxi to Euclidean}.
\end{proof}

Inspired by Example \ref{ex-k_n uniform convergence} and Theorem \ref{thm-k_n no Lipschitz Bound}, we want to look for a weaker condition which can imply that a sequence is eventually equicontinuous. Notice that Example \ref{ex-k_n uniform convergence} does not satisfy the hypotheses of Theorem \ref{thm-Dense Subset Comparison to Taxi} but, yet, converges uniformly to Euclidean space. This seems to suggest that we should be able to obtain compactness for sequences satisfying a weaker condition than bounded on a countable dense subset, as Theorem \ref{thm-Dense Countable Family Comparison to Taxi} demonstrates. In order to prove the next result we need to recall the definition of a dense countable family of subsets.

\begin{defn}\label{def-Dense Countable Family of Subsets}
    Let $C_n$ be a sequence of non-negative real numbers so that $C_n \rightarrow 0$ as $n \rightarrow \infty$. Let $I_n\subset [t_0,t_1]$ be a sequence of subsets so that $\forall x \in [0,1]$, $\exists y \in I_n$ so that $|x-y|\le C_n$. We call such a family of subsets a \textbf{dense countable family of subsets}.
\end{defn}

Now we use Definition \ref{def-Dense Countable Family of Subsets} to show that a sequence which is bounded on a dense countable family of subsets is almost Lipschitz and, hence, eventually equicontinuous by Theorem \ref{thm-Almost Lipschitz Implies Eventually Equicontinuity}. We note that Example \ref{ex-v_n Converges to a Metic with Different Topology} shows that one cannot prove the conclusion of Theorem \ref{thm-Dense Countable Family Comparison to Taxi} under a weaker hypothesis.

\begin{thm}\label{thm-Dense Countable Family Comparison to Taxi} 
     Let $C \ge 1$, $f_n:[t_0,t_1]\rightarrow (0,\infty)$ be a sequence of functions, and $Q_n \subset [t_0,t_1]$ a dense countable family of subsets. Then if 
    \begin{align}
        f_n(q) \le C, \quad \forall q \in Q_n,
    \end{align}
    then 
    \begin{align}
        d_{f_n}(p,q) \le C d_{taxi}(p,q)+2C_n,
    \end{align}
    for all $p,q \in M$ and hence $d_{f_n}$ is eventually equicontinuous.
\end{thm}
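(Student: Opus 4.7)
The plan is to mirror the proof of Theorem \ref{thm-Dense Subset Comparison to Taxi} with one essential change: in the earlier result we used true density of $Q$ to pick $x_\varepsilon \in Q$ arbitrarily close to $x(p)$, so the two vertical legs of the comparison path contributed a vanishing amount. Now $Q_n$ is only dense up to error $C_n$, so each vertical leg can contribute up to $C_n$ to the length, producing an unavoidable additive error of $2 C_n$ rather than zero. This is precisely the shape of the almost Lipschitz bound that Theorem \ref{thm-Almost Lipschitz Implies Eventually Equicontinuity} consumes.

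First, given $p,q \in M$, I would use Definition \ref{def-Dense Countable Family of Subsets} to select a single anchor point $y \in Q_n$ with $|y - x(p)| \le C_n$. Set $p' = (y, p_\Sigma)$ and $q' = (y, q_\Sigma)$; both lie on the slice $\{y\} \times \Sigma$, where by hypothesis $f_n(y) \le C$. By the triangle inequality for $d_{f_n}$ and estimating each leg by its generalized line \eqref{def-Generalized Line}, I obtain
\begin{align*}
d_{f_n}(p, q) &\le L_{f_n}(\ell_{pp'}) + L_{f_n}(\ell_{p'q'}) + L_{f_n}(\ell_{q'q}) \\
&= |x(p) - y| + f_n(y)\, d_\sigma(p_\Sigma, q_\Sigma) + |x(q) - y|.
\end{align*}
The first term is at most $C_n$, the middle at most $C d_\sigma(p_\Sigma, q_\Sigma)$, and the third at most $|x(q) - x(p)| + C_n$ by the triangle inequality on $\R$. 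Using $C \ge 1$ to combine yields
\[
d_{f_n}(p, q) \le C |x(q) - x(p)| + C d_\sigma(p_\Sigma, q_\Sigma) + 2 C_n = C d_{taxi}(p, q) + 2 C_n,
\]
which handles the cases $x(p) = x(q)$ and $x(p) \ne x(q)$ uniformly.

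For eventual equicontinuity, I would combine this almost Lipschitz bound with the inequality $d_{taxi} \le \sqrt{2}\, d$ from \eqref{eq-Taxi to Euclidean} to obtain $d_{f_n}(p,q) \le \sqrt{2} C\, d(p,q) + 2 C_n$, and then invoke Theorem \ref{thm-Almost Lipschitz Implies Eventually Equicontinuity} with constant $\sqrt{2} C$ and error sequence $2 C_n \to 0$.

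I do not anticipate a serious obstacle: the main subtlety is realizing that anchoring $y$ near $x(p)$, rather than trying to place $y$ inside $[x(p), x(q)]$ as in the strict density case, unifies the case analysis and keeps the total error at exactly $2 C_n$. Everything else transfers unchanged from Theorem \ref{thm-Dense Subset Comparison to Taxi}, and the two pieces of the conclusion then follow from the combination just described.
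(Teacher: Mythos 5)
Your proof is correct and follows essentially the same strategy as the paper: anchor a point $u_n \in Q_n$ within distance $C_n$ of $x(p)$ and route the comparison path $p \to p' \to q' \to q$ through the slice $\{u_n\}\times\Sigma$ where $f_n$ is controlled. Your single estimate $|x(q)-u_n|\le|x(q)-x(p)|+C_n$ subsumes the paper's three-way case split (whether $u_n\in[x(p),x(q)]$, $u_n\notin[x(p),x(q)]$, or $x(p)=x(q)$), yielding the same bound with less bookkeeping.
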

\begin{proof}
    We proceed by considering two cases.

\textbf{Case 1: $x(p) \neq x(q)$.}

Without loss of generality, we may assume that $x(p) < x(q)$. Choose $u_n \in Q_n$ so that $|x_1-u_n|\le C_n$, which can be done by definition of $Q_n$. Define $p=(x(p), p_{\Sigma})$, $q=(x(q), q_{\Sigma})$, $p'=(u_n,p_{\Sigma})$, and $q'=(u_n,q_{\Sigma})$.

Suppose $u_n \in [x(p),x(q)] $ and calculate 
\begin{align}
    d_{f_n}(p,q)&\leq d_{f_n}(p,p') +d_{f_n}(p',q') + d_{f_n}(q',q)
    \\&\leq L_{f_n}(\ell_{pp'})+L_{f_n}(\ell_{p'q'})+L_{f_n}(\ell_{q'q})
    \\&\leq|x(p)-u_n|+f_n(u_n)d_{\sigma}(p_{\Sigma},q_{\Sigma})+|x(q)-x(p)|-|x(p)-u_n|
    \\&=|x(p)-x(q)|+f_n(u_n)d_{\sigma}(p_{\Sigma},q_{\Sigma})
    \\&\leq Cd_{taxi}(p,q).
\end{align}
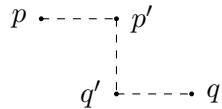
\begin{figure}[h]
 \centering
 \begin{tikzpicture}[scale=1]
  \draw[dashed] (1,2) -- (2,2) [radius=0.025];
  \draw[dashed] (2,2) -- (2,1) [radius=0.025];
  \draw[dashed] (2,1) -- (3,1) [radius=0.025];
   \draw[fill] (1,2) circle [radius=0.025];
   \node[left, outer sep=2pt] at (1,2){$p$};
   \draw[fill] (3,1) circle [radius=0.025];
   \node[right, outer sep=2pt] at (3,1) {$q$};
   \draw[fill] (2,2) circle [radius=0.025];
      \node[right, outer sep=2pt] at (2,2){$p'$};
   \draw[fill] (2,1) circle [radius=0.025];
   \node[left, outer sep=2pt] at (2,1) {$q'$};
 \end{tikzpicture}
 \caption{When $x(p) \neq x(q)$ and $u_n \in [x(p),x(q)]$.}
 \label{fig:curve}
\end{figure}

If $u_n \not \in [x(p),x(q)]$, then we can calculate 
\begin{align}
    d_{f_n}(p,q)&\leq d_{f_n}(p,p') +d_{f_n}(p',q') + d_{f_n}(q',q)
    \\&\leq L_{f_n}(\ell_{pq'})+L_{f_n}(\ell_{p'q'})+L_{f_n}(\ell_{q'q})
    \\&\leq C_n+f_n(u_n)d_{\sigma}(p_{\Sigma},q_{\Sigma})+|x(q)-x(p)|+C_n
    \\&=|x(p)-x(q)|+f_n(u_n)d_{\sigma}(p_{\Sigma},q_{\Sigma}) + 2C_n
    \\&\leq Cd_{taxi}(p,q) + 2C_n 
\end{align}
\begin{figure}[h]
 \centering
 \begin{tikzpicture}[scale=1]
  \draw[dashed] (1,2) -- (0,2) [radius=0.025];
  \draw[dashed] (0,2) -- (0,1) [radius=0.025];
  \draw[dashed] (0,1) -- (2,1) [radius=0.025];
   \draw[fill] (1,2) circle [radius=0.025];
   \node[right, outer sep=2pt] at (1,2){$p$};
   \draw[fill] (2,1) circle [radius=0.025];
   \node[right, outer sep=2pt] at (2,1) {$q$};
   \draw[fill] (0,2) circle [radius=0.025];
      \node[left, outer sep=2pt] at (0,2){$p'$};
   \draw[fill] (0,1) circle [radius=0.025];
   \node[left, outer sep=2pt] at (0,1) {$q'$};
 \end{tikzpicture}
\caption{When $x(p) \neq x(q)$ and $u_n \not \in [x(p),x(q)]$.}
 \label{fig:curve}
\end{figure}

\textbf{Case 2: $x(p) = x(q)$.}

Pick a $u_n \in Q_n$ such that $|x(p)-u_n|<C_n$. Let $p = (x(p),p_{\Sigma})$, $q = (x(q),q_{\Sigma})$, $p' = (u_n,p_{\Sigma})$ and $q' = (u_n,q_{\Sigma})$. Now, we calculate
\begin{align}
    d_{f_n}(p,q) &\leq d_{f_n}(p,p') + d_{f_n}(p',q') + d_{f_n}(q',q)
    \\&\leq L_{f_n}(\ell_{pp'}) + L_{f_n}(\ell_{p' q'}) + L_{f_n}(\ell_{q' q})
    \\&=|x(p)-u_n|+f_n(u_n)d_{\sigma}(p_{\Sigma},q_{\Sigma})+|x(p)-u_n| 
    \\&\leq Cd_{taxi}(p,q)+2C_n.
\end{align}
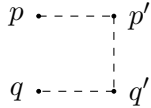
\begin{figure}[h]
 \centering
 \begin{tikzpicture}[scale=1]
  \draw[dashed] (1,2) -- (2,2) [radius=0.025];
  \draw[dashed] (2,2) -- (2,1) [radius=0.025];
  \draw[dashed] (2,1) -- (1,1) [radius=0.025];
   \draw[fill] (1,2) circle [radius=0.025];
   \node[left, outer sep=2pt] at (1,2){$p$};
   \draw[fill] (1,1) circle [radius=0.025];
   \node[left, outer sep=2pt] at (1,1) {$q$};
   \draw[fill] (2,2) circle [radius=0.025];
      \node[right, outer sep=2pt] at (2,2){$p'$};
   \draw[fill] (2,1) circle [radius=0.025];
   \node[right, outer sep=2pt] at (2,1) {$q'$};
 \end{tikzpicture}
\caption{When $x(p) = x(q)$.}
 \label{fig:curve}
\end{figure}

Thus we reach our conclusion $\forall p,q \in M$ we have $d_f(p,q)\leq Cd_{taxi}(p,q)+2C_n$.

\end{proof}

We note that Theorem \ref{thm-Dense Subset Comparison to Taxi} and Theorem \ref{thm-Dense Countable Family Comparison to Taxi} are enough to obtain compactness of the sequence of distance functions, but Example \ref{ex-s_n Converge Quotient Metric Space} and Example \ref{ex-h_n Converge Quotient Metric Space} show that the subsequence will not necessarily converge to a metric space with the same topology as the sequence. Here, we observe the simple fact that a uniform bound from below is enough to ensure that the subsequence will converge to a metric satisfying the same uniform lower bound. We also note that Example \ref{ex-s_n Converge Quotient Metric Space} and Example \ref{ex-h_n Converge Quotient Metric Space} show that we cannot weaken this hypothesis and obtain the same conclusion.

\begin{thm}\label{thm-Bounded warping function}
 Let $C_1>0$ and $f_n:[t_0,t_1]\rightarrow (0,\infty)$ be a sequence of functions. If 
 \begin{align}
     C_1 \le f_n(x) , \quad \forall x \in [t_0,t_1],
 \end{align}
 then
 \begin{align}
\min\{1,C_1\}d(p,q)  \le d_{f_n}(p,q),  
 \end{align}
 for all $p,q \in M$.
\end{thm}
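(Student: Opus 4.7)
The plan is to reduce the problem to a constant warping function via Theorem \ref{thm-Distance Lower Bound Estimate} and then perform a direct integrand comparison. Set $c := \min\{1, C_1\}$. Since $c \le C_1 \le f_n(x)$ for every $x \in [t_0, t_1]$, Theorem \ref{thm-Distance Lower Bound Estimate} applied to $f_n$ and the constant function $g \equiv c$ yields
\begin{align*}
d_c(p,q) \le d_{f_n}(p,q), \qquad \forall p,q \in M.
\end{align*}
So it suffices to prove $d_c(p,q) \ge c \, d(p,q)$ for the constant warping function $c$.

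For the constant case, I would do a pointwise integrand comparison. Let $\gamma(t) = (x(t), \alpha(t))$ be any piecewise smooth curve from $p$ to $q$. Because $c \le 1$, we have $x'(t)^2 \ge c^2 x'(t)^2$, and therefore
\begin{align*}
x'(t)^2 + c^2 \sigma(\alpha'(t), \alpha'(t)) \ge c^2 x'(t)^2 + c^2 \sigma(\alpha'(t), \alpha'(t)) = c^2 \bigl( x'(t)^2 + \sigma(\alpha'(t), \alpha'(t)) \bigr).
\end{align*}
Taking square roots and integrating,
\begin{align*}
L_c(\gamma) = \int_0^1 \sqrt{x'(t)^2 + c^2 \sigma(\alpha'(t), \alpha'(t))} \, dt \ge c \int_0^1 \sqrt{x'(t)^2 + \sigma(\alpha'(t), \alpha'(t))} \, dt = c \, L_1(\gamma).
\end{align*}
Since $L_1 = L$ is the length functional defining the Euclidean-type distance $d$ in \eqref{def-Generalized Euclidean Distance}, taking the infimum over all piecewise smooth $\gamma$ gives $d_c(p,q) \ge c \, d(p,q)$. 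Chaining with the earlier comparison yields the conclusion $\min\{1, C_1\} d(p,q) \le d_{f_n}(p,q)$.

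There is no real obstacle here; the only subtlety is that one must split the roles of the base direction $x'$ and the fiber direction $\alpha'$: the inequality $x'^2 + c^2 \sigma \ge c^2(x'^2 + \sigma)$ uses $c \le 1$ in an essential way, which is exactly why we take the minimum with $1$ in the statement. If $C_1 \ge 1$ then $c = 1$ and the conclusion is simply $d_{f_n} \ge d$, a direct consequence of Theorem \ref{thm-Distance Lower Bound Estimate} without any scaling argument; if $C_1 < 1$ then $c = C_1$ and the scaling step above produces the factor. Both cases are handled uniformly by the argument above.
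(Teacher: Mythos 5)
Your proof is correct and follows essentially the same approach as the paper's: the core step in both is the pointwise integrand estimate that uses $c := \min\{1,C_1\} \le 1$ to absorb the $x'(t)^2$ term, giving $L_{f_n}(\gamma) \ge c\,L(\gamma)$ and hence $d_{f_n} \ge c\,d$ by taking the infimum. The only cosmetic difference is that you first reduce to the constant warping function via Theorem \ref{thm-Distance Lower Bound Estimate} before comparing integrands, while the paper applies the integrand comparison to $L_{f_n}(\gamma)$ directly in one step.
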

\begin{proof}

Consider any curve $\gamma(t)=(x(t),\alpha(t))$ connecting $p$ to $q$ so that
    \begin{align}
        L_f(\gamma) &= \int_0^1 \sqrt{x'(t)^2+f(x(t))^2\sigma(\alpha',\alpha')}dt
        \\&\ge \min\{1,C_1\} \int_0^1 \sqrt{x'(t)^2+ \sigma(\alpha',\alpha')}dt
        = \min\{1,C_1\}L(\gamma),
    \end{align}
and hence
    \begin{align}
        d_f(p,q) 
        &= \inf\{L_f(\gamma) : \gamma \text{ piecewise smooth}, \gamma(0)=p,\gamma(1)=q\}
       \\ &\geq  \inf\{\min\{1,C_1\}L(\gamma) :\gamma \text{ piecewise smooth}, \gamma(0)=p,\gamma(1)=q\}
       \\& = \min\{1,C_1\}  d(p,q),
    \end{align} 
as desired.
\end{proof}

We now give the proofs of the two main theorems by combining several results established above.

\begin{proof}[Proof of Theorem \ref{thm-Main Thm 1}]
    By Theorem \ref{thm-Dense Subset Comparison to Taxi}, Theorem \ref{thm-Lipschitz Implies Equicontinuity}, and Theorem \ref{thm-Arzela Ascoli Theorem}, we see that a subsequence of the distance functions $d_{f_n}$ uniformly converges to a function $d_{\infty}$ which is symmetric, satisfies the triangle inequality, continuous with respect to $d$, and so that $d_{\infty}\le C d$. Then, Theorem \ref{thm-Bounded warping function} implies that $d_{\infty} \ge c d$ and hence $cd \le d_{\infty} \le C d$. So, $d_{\infty}$ defines a metric with the same topology.
\end{proof}

\begin{proof}[Proof of Theorem \ref{thm-Main Thm 2}]
    By Theorem \ref{thm-Dense Countable Family Comparison to Taxi}, Theorem \ref{thm-Almost Lipschitz Implies Eventually Equicontinuity}, and Theorem \ref{thm- Arzela Ascoli Theorem 2}, we see that a subsequence of the distance functions $d_{f_n}$ uniformly converges to a function $d_{\infty}$ which is symmetric, satisfies the triangle inequality, continuous with respect to $d$, and so that $d_{\infty}\le C d$. Then, Theorem \ref{thm-Bounded warping function} implies that $d_{\infty} \ge c d$ and, hence, $cd \le d_{\infty} \le C d$. So, $d_{\infty}$ defines a metric with the same topology.
\end{proof}

\bibliographystyle{plain}
\bibliography{bibliography}

\end{document}